\documentclass[10pt, reqno]{amsart}
\usepackage{graphicx, amssymb, amsmath, amsthm}
\numberwithin{equation}{section}

\let\Re=\undefined\DeclareMathOperator*{\Re}{Re}

\newcommand{\R}{\mathbb{R}}

\newtheorem{theorem}{Theorem}[section]

\newtheorem{lemma}[theorem]{Lemma}

\newtheorem{corollary}[theorem]{Corollary}

\newtheorem{proposition}[theorem]{Proposition}

\theoremstyle{definition}
\newtheorem{definition}[theorem]{Definition}
\newtheorem{remark}[theorem]{Remark}

\theoremstyle{remark}

\makeatletter
\newcommand{\Extend}[5]{\ext@arrow0099{\arrowfill@#1#2#3}{#4}{#5}}
\makeatother

\begin{document}
\title[Klein-Gordon-Hartree Equation]
{The Defocusing Energy-critical  Klein-Gordon-Hartree Equation}

\author{Qianyun Miao}
\address{School of Mathematics and System Sciences, Beihang University,
Beijing,\ China,\ 100191 } \email{mqy8955@sina.com}

\author{Jiqiang Zheng}
\address{Laboratoire de Math\'ematiques J.A. Dieudonn\'e,
Universit\'e Nice Sophia-Antipolis, 06108 Nice Cedex 02, France}
\email{zhengjiqiang@gmail.com, zheng@unice.fr}

\begin{abstract}
In this paper, we study the scattering theory  for the defocusing
energy-critical Klein-Gordon equation with a cubic convolution
$u_{tt}-\Delta u+u+(|x|^{-4}\ast|u|^2)u=0$ in the spatial dimension
$d \geq 5$. We utilize the strategy in \cite{IMN} derived from
concentration compactness ideas to show that the proof of the global
well-posedness and scattering is reduced to disprove the existence
of the soliton-like solution. Employing technique from \cite{Pa2},
we consider a virial-type identity in the direction orthogonal to
the momentum vector so as to exclude such solution.
\end{abstract}

 \maketitle

\begin{center}
 \begin{minipage}{100mm}
   { \small {{\bf Key Words:}  Klein-Gordon-Hartree equation;  Scattering theory; Strichartz estimate.}
      {}
   }\\
    { \small {\bf AMS Classification:}
      {Primary 35P25. Secondary 35B40, 35Q40, 81U99.}
      }
 \end{minipage}
 \end{center}


\section{Introduction}
\setcounter{section}{1}\setcounter{equation}{0} This paper is
devoted to the study of the Cauchy problem of the defocusing
energy-critical Klein-Gordon-Hartree equation
\begin{align} \label{equ1}
\begin{cases}    \ddot{u} - \Delta u  +  u +  f(u)=  0,  \qquad  (t,x) \in
\mathbb{R}\times\mathbb{R}^d, d \geq 5,\\
u(0,x)=u_0(x),~u_t(0,x)=u_1,\end{cases}
\end{align}
where $f(u)=(V(x)*|u|^2) u $ with $V(x)=|x|^{-4}$. Here $u$ is a
real-valued function defined in $\mathbb{R}^{d+1}$, the dot denotes
the time derivative, $\Delta$ is the Laplacian in $\mathbb{R}^{d}$,
$V(x)$ is called the potential, and $*$ denotes the spatial
convolution in $\mathbb{R}^{d}$.

Formally, the solution $u$ of \eqref{equ1} conserves  the energy
\begin{equation*}\label{econ}
\aligned E(u(t),\dot{u}(t))=&\frac12 \int_{\mathbb{R}^d}
\big(\big|\dot{u}(t,x) \big|^2 +\big| \nabla u(t,x) \big|^2 + \big|
u(t,x) \big|^2 \big)dx\\& + \frac{1}{4}
\iint_{\mathbb{R}^d\times\mathbb{R}^d}
 \frac{|
u(t,x)|^2 |u(t,y)|^2}{|x-y|^4} dxdy\\
=&E(u_0,u_1),
\endaligned
\end{equation*}
and the momentum
\begin{equation}
P(u)(t)=\int_{\mathbb{R}^d}u_t(t,x)\nabla u(t,x) dx=P(u)(0).
\end{equation}

For the equation $(\ref{equ1})$ with nonlinearity
$f(u)=\mu(|x|^{-\gamma}\ast|u|^2)u,~\mu=\pm1$, using the ideas of
Strauss \cite{St81a}, \cite{St81b} and Pecher \cite{Pe85}, Mochizuki
\cite{Mo89} showed that if $d \geq 3$, $2\leq \gamma < \min(d, 4)$,
then global well-posedness and scattering results with small data
hold in the energy space $H^1(\R^d)\times L^2(\R^d)$. For the
general initial data, we refer to the authors \cite{MZh} where we
develop  a complete scattering theory in the energy space for
\eqref{equ1} with the subcritical nonlinearity (i.e. $2< \gamma <
\min(d, 4)$) for both defocusing ($\mu=1$) and focusing ($\mu=-1$)
in spatial dimension $d\geq3$. In this paper, we will focus on the
energy-critical case, i.e. $\gamma=4$ and $d\geq5.$ We refer also
to Miao-Zhang \cite{MZ} where the low regularity for the cubic
convolution defocusing Klein-Gordon-Hartree equation is discussed.

Before stating our main results, we recall the scattering theory for
the classical Klein-Gordon equation, i.e \eqref{equ1} with
nonlinearity $f(u)=\mu|u|^{p-1}u$. For $\mu=1$ and
\begin{equation}\label{}
   1+\frac{4}{d}<p<1+\frac{4\gamma_d}{d-2},
\quad
 \gamma_d=\left\{ \aligned
    &1 ,& 3\leq d\leq 9; \\
     &\frac{d}{d+1},& d\geq 10,
\endaligned
\right.
\end{equation}
Brenner \cite{Br85} established the scattering results in the energy
space in dimension $d\geq 10$. Thereafter, Ginibre and Velo
\cite{GiV85b} exploited the Birman-Solomjak space $\ell^m(L^q,I,B)$
in \cite{BiS75} and the delicate estimates to improve the results in
\cite{Br85}, which covered all subcritical cases. Finally K.
Nakanishi \cite{Na99b} obtained the scattering results for the
energy-critical case by the strategy of induction on energy
\cite{CKSTT07} and a new Morawetz-type estimate. And recently, S.
Ibrahim, N. Masmoudi and K. Nakanishi\cite{IMN, IMN1} utilized the
concentration compactness ideas to give the scattering threshold for
the focusing (i.e. $\mu=-1$) nonlinear Klein-Gordon equation. We
remark that their method also works for the defocusing case. We will
utilize their argument to study the scattering theory for the
defocusing energy-critical Klein-Gordon-Hartree equation.

On the other hand, the scattering theory for the Hartree equation
$$i\dot{u}=-\Delta u+(|x|^{-\gamma}*|u|^2)u$$
has been also  studied by many authors (see
\cite{GiV00,LiMZ09,MXZ07a,MXZ09,MXZ09a,MXZ07b}). For the
energy-subcritical
 case, i.e. $\gamma<4$, Ginibre and Velo \cite{GiV00} obtained the asymptotic completeness
in the energy space $H^1(\R^d)$ by deriving the associated Morawetz
inequality and extracting an useful Birman-Solomjak type estimate.
Nakanishi \cite{Na99d} improved the results by a new Morawetz
estimate. For the energy-critical case ($\gamma=4$ and $d\geq5$),
Miao, Xu and Zhao \cite{MXZ07a} took advantage of a new kind of the
localized Morawetz estimate to rule out the possibility of the
energy concentration at origin and established the scattering
results in the energy space for the radial data. We refer also to
\cite{MXZ09,MXZ09a,MXZ07b} for the general data and also
mass-critical case.

 Compared with the classical
Klein-Gordon equation with the local nonlinearity $f(u)=|u|^{p-1}u$,
the nonlinearity $f(u)=(V(\cdot)*|u|^2) u $ is nonlocal, which
brings us many difficulties. The main difficulty is the absence of a
Lorentz invariance which could be used to control the momentum
efficiently. We will overcome this difficulty by considering a
Virial-type identity in the direction orthogonal to the momentum
vector following the technique in \cite{Pa2}.

Now we introduce  the definition of the strong solution for
\eqref{equ1}.
\begin{definition}[solution]\label{def1.1}
 A function $u:~I\times\R^d\to\R$ on a nonempty time
interval $0\in I$ is a strong solution to \eqref{equ1} if for any
compact $J\subset I$, $(u,u_t)\in C_t^0(J; H^1(\R^d)\times
L^2(\R^d))$ and $$u\in W(J),\quad W(I):=
L_t^{\frac{2(d+1)}{d-1}}(J;B^{\frac12}_{\frac{2(d+1)}{d-1},2}(\mathbb{R}^d))$$
 and for each $t\in I$, $(u(t),\dot u(t))$ satisfies  the  following
Duhamel's formula:
\begin{equation}\label{duhamel}
{u(t)\choose \dot{u}(t)} = V_0(t){u_0(x) \choose u_1(x)}
-\int^{t}_{0}V_0(t-s){0 \choose f(u(s))} ds,
\end{equation}
where
$$  V_0(t) = {\dot{K}(t), K(t)
\choose \ddot{K}(t), \dot{K}(t)}, \quad
K(t)=\frac{\sin(t\omega)}{\omega},\quad
\omega=\big(1-\Delta\big)^{1/2}.$$  The interval $I$ is called the
lifespan of $u$. Moreover, if the solution $u$ cannot be extended to
any strictly large interval, then we say that $u$ is a
maximal-lifespan solution. We say that $u$ is a global solution if
$I=\R.$
\end{definition}

\begin{remark}
From Remark \ref{rem2.3} below, we obtain the solution $u$ lies in
the space $W(I)$ locally in time. Also, the finiteness of the norm
on maximal-lifespan implies the solution is global and scatters in
both time directions by standard argument. In view of this, we
define
\begin{equation}\label{scattersize}
S_I(u)=\|u\|_{ST(I)}=\|u\|_{[W](I)}
\end{equation}
as the scattering size of $u$.

\end{remark}

Our main result is the following global well-posedness and
scattering result in the energy space.

\begin{theorem}\label{theorem}
Assume that  $d\geq 5$,  and $(u_0,u_1)\in H^1(\mathbb{R}^d)\times
L^2(\mathbb{R}^d)$. Then there exists a unique global solution
$u(t)$ of \eqref{equ1} which scatters in the sense that
 there exist  solutions $v_\pm$ of the free Klein-Gordon equation
\begin{equation}\label{le}
    \ddot{v} - \Delta v  + v =  0
\end{equation}
with $(v_\pm(0), \dot{v}_\pm(0))\in H^1\times L^2$ such that
\begin{equation}\label{1.2}
\big\|\big(u(t), \dot{u}(t)\big)-\big(v_\pm(t),
\dot{v}_\pm(t)\big)\big\|_{H^1\times L^2} \longrightarrow 0,\quad
\text{as}\quad t\longrightarrow \pm\infty.\end{equation}
\end{theorem}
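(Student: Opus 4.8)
The plan is to follow the concentration–compactness/rigidity scheme of \cite{IMN}, adapted to the nonlocal Hartree nonlinearity. First I would set up the local theory: using the Strichartz estimates for the Klein–Gordon propagator $V_0(t)$ on the space $W(I)$ together with the Hardy–Littlewood–Sobolev inequality to handle the convolution potential $|x|^{-4}$, one obtains local well-posedness, a small-data global result, and a stability/perturbation lemma. These ingredients, combined with the conservation of the energy $E(u,\dot u)$ (which is \emph{coercive} in the defocusing case, controlling $\|(u,\dot u)\|_{H^1\times L^2}$), reduce Theorem \ref{theorem} to proving an a priori bound $S_{\mathbb{R}}(u)<\infty$ for every finite-energy solution.

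Next, arguing by contradiction via a profile decomposition adapted to the Klein–Gordon flow (accounting for time translations, space translations, and Lorentz-type boosts, but \emph{not} scaling since the equation is not scale invariant — the mass term $u$ fixes a frequency scale), I would extract a minimal-energy blow-up solution: a nonzero global solution whose trajectory $\{(u(t),\dot u(t)):t\in\mathbb{R}\}$ is precompact in $H^1\times L^2$ modulo spatial translations $x\mapsto x-y(t)$. Standard arguments then show this critical element is in fact a \emph{soliton-like} solution, with $y(t)$ and the momentum $P(u)$ under control; in particular one may normalize and, after a suitable reduction, assume $|y(t)|=o(t)$ and that the solution concentrates in a slowly moving ball.

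The heart of the proof is then to exclude this soliton-like solution, and this is where I expect the main obstacle. For the local nonlinearity Nakanishi used a Morawetz/virial estimate, but the absence of Lorentz invariance for the Hartree nonlinearity means one cannot boost away the momentum vector $P(u)$. Following \cite{Pa2}, the idea is to work with a \emph{truncated virial identity in the direction orthogonal to $P(u)$}: choosing a spatial weight $a(x)$ that depends only on the components of $x$ perpendicular to $P(u)/|P(u)|$, one computes $\frac{d}{dt}\int \dot u\,(a\cdot\nabla u + \tfrac12(\nabla\cdot a)u)\,dx$ and shows the resulting quadratic-plus-potential expression is coercive, bounded below by a positive multiple of a localized energy, while the boundary/error terms coming from the truncation and from $y(t)$ are negligible by the compactness of the orbit and the decay of the potential $|x|^{-4}$. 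Integrating over a long time interval $[0,T]$ yields a bound growing in $T$ on one side and a bound uniform in $T$ on the other, forcing $u\equiv 0$ — a contradiction. The delicate points are verifying that the nonlocal term $(|x|^{-4}*|u|^2)u$ produces a term of favorable sign (or controllably small) in this skew virial computation, and that the orthogonal-direction truncation does not destroy the monotonicity; these are the steps I would expect to require the most care, and where the technique of \cite{Pa2} is essential.
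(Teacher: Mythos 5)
Your high-level plan matches the paper's: local theory and small-data scattering, a concentration-compactness step producing a minimal-energy almost-periodic solution, and a rigidity step using a virial identity in a direction orthogonal to the momentum vector to overcome the lack of Lorentz invariance of the nonlocal nonlinearity. The rigidity sketch (truncated virial along $x_2$, error terms controlled by precompactness of the orbit, contradiction between an $O(T)$ lower bound and an $O(R)$ upper bound) is essentially what the paper does in Section~5.

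However, there is a genuine gap in your description of the profile decomposition. You claim it involves time/space translations and ``Lorentz-type boosts, but not scaling since the equation is not scale invariant.'' This is backwards relative to the framework of \cite{IMN} that the paper uses: the linear profile decomposition (Lemma~\ref{lem3.1}) carries a scaling parameter $h_n^j\in(0,1]$, with each sequence $\{h_n^j\}$ either identically $1$ or going to $0$, and there are \emph{no} Lorentz boosts. The point you miss is that although the mass term breaks scale invariance globally, at the energy-critical exponent the nonlinearity is $\dot H^1$-scale-invariant and the mass is negligible at high frequency, so concentration at shrinking scales is a real possibility that the profile decomposition must detect. Crucially, the case $h_n^0\to 0$ cannot be dismissed as incompatible with the equation; it must be \emph{ruled out} by observing that the limiting nonlinear profile would solve the massless energy-critical wave--Hartree equation $\partial_{tt}u-\Delta u+(|x|^{-4}\ast|u|^2)u=0$ with finite energy and infinite Strichartz norm, contradicting the scattering result of Miao--Zhang--Zheng \cite{MZZ}. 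Without this step, your extraction of a critical element with $h^0_\infty=1$ (and hence with precompact orbit in $H^1\times L^2$) does not go through, and the ensuing virial argument would have nothing to apply to. A smaller inaccuracy: the paper does not normalize $c(t)$ to satisfy $|c(t)|=o(t)$; it only produces $c(t)\in C^1$ with $|\dot c(t)|\lesssim 1$, which suffices for the error estimates in the virial computation.
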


The outline for the proof of Theorem \ref{theorem}:
 we define the function $\Lambda$ by
\begin{equation}\label{znorm}
\Lambda(E)=\sup\{\Vert u\Vert_{ST(I)}:E(u,u_t)\le E\}
\end{equation}
where the supremum is taken over all strong solutions $u$ of
\eqref{equ1} on any interval $I$ with energy not greater than $E$,
and define
\begin{align*}\label{emax}E_{max}&=\sup\{E:\Lambda(E)<+\infty\}.
\end{align*}
The small data scattering (Theorem \ref{small} below) tells us
$E_{max}>0$. Our goal next is to prove that $E_{max}=+\infty$. We
argue by contradiction. We show that if $E_{max}<+\infty$, then
there exists a nonlinear solution of \eqref{equ1} with energy be
exactly $E_{max}$. Moreover, this solution satisfies some strong
compactness properties. This is completed in Section 4 where we
utilize the profile decomposition that was established in
\cite{IMN}, and a strategy introduced by Kenig and Merle \cite{KM}.
We consider a virial-type identity in the direction orthogonal to
the momentum vector  following the technique \cite{Pa2} to obtain a
contradiction. We refer to Section 5 for more details.

The paper is organized as follows. In Section $2$, we deal with the
local theory
 for the equation $(\ref{equ1})$.  In Section $3$, we give
the linear and nonlinear profile decomposition and show some
properties of the profile. In Section 4, we extract a critical
solution. Finally in Section $5$, we preclude the critical solution,
which completes the proof of Theorem \ref{theorem}.

\section{Preliminaries}
 \setcounter{section}{2}\setcounter{equation}{0}

\subsection{Notation}
First, we give some notations which will be used throughout this
paper. We always assume the spatial dimension $d\geq 5$ and let
$2^*=\frac{2d}{d-2}$. For any $r: 1\leq r \leq \infty$, we denote by
$\|\cdot \|_{r}$ the norm in $L^{r}=L^{r}(\mathbb{R}^d)$ and by $r'$
the conjugate exponent defined by $\frac{1}{r} + \frac{1}{r'}=1$.
For any $s\in \mathbb{R}$, we denote by $H^s(\mathbb{R}^d)$ the
usual Sobolev space. Let $\psi\in \mathcal{S}(\mathbb{R}^d)$ be such
that $\text{supp}\ {\widehat{\psi}} \subseteq \big\{\xi: \frac{1}{2}
\leq|\xi| \leq 2 \big\}$ and $ \sum_{j\in \mathbb{Z}} \widehat{\psi}
(2^{-j} \xi) = 1 $ for $\xi \neq 0.$ Define $\psi_0$ by
$\widehat{\psi}_0 = 1 -
 \sum_{j\geq 1} \widehat{\psi} (2^{-j} \xi).$  Thus $\text{supp}\
\widehat{\psi}_0 \subseteq \big\{\xi: |\xi| \leq 2 \big\}$ and
$\widehat{\psi}_0 = 1$ for $|\xi| \leq 1$. We denote by $\Delta_j$
and $\mathcal{P}_0$ the convolution operators whose symbols are
respectively given by $\widehat{\psi}(\xi/2^{j})$ and
$\widehat{\psi}_0(\xi)$. For $s \in \mathbb{R}, 1\leq r \leq
\infty$, the inhomogeneous Besov space $ B^{s}_{r, 2}(\mathbb{R}^d)$
is defined by
$$ B^{s}_{r, 2}(\mathbb{R}^d) = \bigg\{ u \in
\mathcal{S}'(\mathbb{R}^d), \|\mathcal{P}_0 u\|^2_{L^r}+
\big\|2^{js} \|\Delta_j u\|_{L^r} \big\|^2_{l^2_{j\in \mathbb{N}}} <
\infty \bigg\}.$$ For details of Besov space, we refer to
\cite{BL76}. For any interval $I\subset \mathbb{R}$ and any Banach
space $X$ we denote by ${\mathcal C}(I; X)$ the space of strongly
continuous functions from $I$ to $X$ and by $L^q(I; X)$ the space of
strongly measurable functions from $I$ to $X$ with $\|u(\cdot);
X\|\in L^q(I).$ Given $d,$ we define, for $2\le r\le \infty$,
$$\delta(r)=d\Big(\frac12-\frac1{r}\Big).$$
Sometimes abbreviate $\delta(r)$, $\delta(r_i) $ to
$\delta,~\delta_i $ respectively. We denote by $\langle\cdot,
\cdot\rangle$ the scalar product in $L^2$. We let $L_*^p$ denote the
weak $L^p$ space.

 \subsection{Strichartz estimate}
In this section, we consider the Cauchy problem  for the equation
$(\ref{equ1})$
\begin{equation} \label{equ2}
    \left\{ \aligned &\ddot{u} - \Delta u  +  u +  f(u)=  0, \\
    &u(0)=u_0,~\dot{u}(0)=u_1.
    \endaligned
    \right.
\end{equation}
The integral equation for the Cauchy problem $(\ref{equ2})$ can be
written as
\begin{equation}\label{inte1}
u(t)=\dot{K}(t)u_0 + K(t)u_1-\int^{t}_{0}K(t-s)f(u(s))ds,
\end{equation}
or
\begin{equation}\label{inte2}
{u(t)\choose \dot{u}(t)} = V_0(t){u_0(x) \choose u_1(x)}
-\int^{t}_{0}V_0(t-s){0 \choose f(u(s))} ds,
\end{equation}
where
$$K(t)=\frac{\sin(t\omega)}{\omega}, \quad V_0(t) = {\dot{K}(t), K(t)
\choose \ddot{K}(t), \dot{K}(t)}, \quad \omega=\big(
1-\Delta\big)^{1/2}.$$

Let $U(t)=e^{it\omega}$, then
\begin{equation*}
\dot{K}(t)= \frac{U(t)+U(-t)}{2}, \qquad  K(t)=
\frac{U(t)-U(-t)}{2i\omega}.
\end{equation*}

Now we recall the following dispersive estimate for the operator
$U(t)=e^{it\omega}$.
\begin{lemma}[\cite{Br85,GiV85b}]\label{lem21}
Let $2\leq r\leq \infty$ and $0\leq \theta\leq 1$. Then
\begin{equation*}
\big\|e^{i\omega t}f
\big\|_{B^{-(d+1+\theta)(\frac12-\frac1r)/2}_{r, 2}} \leq \mu(t)
\big\|f\big\|_{B^{(d+1+\theta)(\frac12-\frac1r)/2}_{r', 2}},
\end{equation*}
where
\begin{equation*}
\mu(t)=C \min\bigg\{ |t|^{-(d-1-\theta)(\frac12-\frac1r)_{+} },
|t|^{-(d-1+\theta)(\frac12-\frac1r)}\bigg\}.
\end{equation*}
\end{lemma}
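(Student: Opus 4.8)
The plan is to derive the estimate by stationary phase analysis of the kernel of $e^{it\omega}$ followed by interpolation with the trivial $L^2$ bound. First I would reduce to a dyadic piece: it suffices to bound, for each $j\in\Z$, the operator $e^{it\omega}\Delta_j$ from $L^{r'}$ to $L^r$, and then reassemble the Besov norms by the triangle inequality in $\ell^2_j$, with the Bernstein-type weights $2^{\pm j(d+1+\theta)(\frac12-\frac1r)/2}$ absorbing the frequency-dependent constants that appear in the single-frequency estimate. On a dyadic shell $|\xi|\sim 2^j$ the phase is $\phi(\xi)=t\langle\xi\rangle + x\cdot\xi$ with $\langle\xi\rangle=(1+|\xi|^2)^{1/2}$, whose Hessian is nondegenerate (its determinant is comparable to $\langle\xi\rangle^{-(d+1)}$), so a stationary phase / van der Corput argument on the kernel
\[
K_j(t,x)=\int e^{i(t\langle\xi\rangle+x\cdot\xi)}\,\widehat{\psi}(\xi/2^j)\,d\xi
\]
yields a decay $|K_j(t,x)|\lesssim 2^{j(d+?)}|t|^{-(d-1)/2}$, and a careful bookkeeping of how the curvature degenerates at low versus high frequency produces the two regimes $|t|^{-(d-1-\theta)(\frac12-\frac1r)_+}$ and $|t|^{-(d-1+\theta)(\frac12-\frac1r)}$, the parameter $\theta$ interpolating between the genuinely dispersive wave-like behaviour at high frequency and the Schr\"odinger-like behaviour at low frequency. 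Taking the minimum is then automatic since both bounds hold simultaneously.

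The key steps, in order, are: (1) reduce to a single dyadic frequency block $\Delta_j$; (2) prove the fixed-frequency dispersive bound $\|e^{it\omega}\Delta_j f\|_{L^\infty}\lesssim \mu_j(t)\,\|\Delta_j f\|_{L^1}$ by stationary phase on $K_j$, tracking the $2^j$-powers and the $t$-decay in both the high- and low-frequency regimes; (3) interpolate this $L^1\to L^\infty$ estimate with the unitary bound $\|e^{it\omega}\Delta_j f\|_{L^2}=\|\Delta_j f\|_{L^2}$ (Plancherel, since $e^{it\omega}$ is a Fourier multiplier of modulus one) via the Riesz–Thorin theorem to get $L^{r'}\to L^r$ with the interpolated decay rate $\mu(t)$ and the interpolated weight exponent $(d+1+\theta)(\frac12-\frac1r)/2$; (4) square-sum over $j\in\Z$ to pass from the $\Delta_j$-level estimate to the Besov spaces, noting that the weights are exactly arranged so that $\ell^2_j\to\ell^2_j$. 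Steps (3) and (4) are routine; the content is in step (2).

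The main obstacle is step (2): obtaining the correct, sharp form of the fixed-frequency decay, in particular the $\theta$-dependent interpolation between $(d-1-\theta)$ and $(d-1+\theta)$ in the exponent and the matching shift in the Besov regularity index. This requires a uniform stationary phase estimate that is valid across all dyadic scales with explicit dependence of the constants on $2^j$, handling the region where the spatial variable is near the group velocity (where the phase is stationary and one gets the full curvature decay $|t|^{-(d-1)/2}$) separately from the region where it is far (where nonstationary phase / integration by parts gives rapid decay). The careful tracking of how the Hessian determinant $\sim\langle\xi\rangle^{-(d+1)}$ behaves — flat like $|\xi|^{-2}$ in $d-1$ directions and like $1$ in one direction at high frequency, versus uniformly $\sim 1$ at low frequency — is what produces the two competing rates and hence the minimum in the statement. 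Since this estimate is quoted from Brenner \cite{Br85} and Ginibre–Velo \cite{GiV85b}, I would cite their computation for the delicate stationary-phase bookkeeping rather than reproduce it, and present only the reduction and interpolation in detail.
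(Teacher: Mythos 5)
The paper does not prove Lemma \ref{lem21}; it is quoted directly from Brenner \cite{Br85} and Ginibre--Velo \cite{GiV85b} without argument, so there is no internal proof to compare against. Your outline --- dyadic decomposition, a stationary-phase $L^1\to L^\infty$ kernel bound at each scale, Riesz--Thorin interpolation with the unitary $L^2$ bound, and $\ell^2_j$ reassembly into Besov norms --- is indeed the standard route taken in those references, and you are explicit that the delicate scale-dependent bookkeeping in step (2) is precisely what you would defer to the citation. That is an appropriate level of detail for a lemma the paper itself only invokes.

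One correction worth noting in your heuristic for step (2): the Hessian of $\phi(\xi)=\langle\xi\rangle$ is $\langle\xi\rangle^{-3}\bigl(\langle\xi\rangle^2 I-\xi\otimes\xi\bigr)$, with eigenvalues $\langle\xi\rangle^{-3}$ (once, radial) and $\langle\xi\rangle^{-1}$ ($d-1$ times, angular), so its determinant is $\langle\xi\rangle^{-(d+2)}$, not $\langle\xi\rangle^{-(d+1)}$, and neither eigenvalue is $O(1)$ at high frequency. This does not change the architecture of the argument --- the source of the two competing decay rates is still the radial degeneration of the curvature relative to the angular directions as $|\xi|\to\infty$ --- but if you were to carry out step (2) in detail the exponent tracking would need to start from the correct eigenvalues, and the clean way to organize it (as in \cite{Br85,GiV85b}) is to rescale each dyadic block to unit frequency, do stationary phase there where the phase is $2^jt\sqrt{|\eta|^2+2^{-2j}}+2^jx\cdot\eta$, and read off both the $|t|$-power and the $2^j$-power from the rescaled eigenvalues before undoing the scaling; the $\theta$-family then emerges by interpolating between the endpoints of the resulting frequency-weighted estimate rather than from a single curvature count.
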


Combining the above lemma,  the abstract duality and interpolation
argument(see \cite{GiV95,KeT98}), we have the following Strichartz
estimates.
\begin{lemma}[\cite{Br85,GiV85b,MZF}]\label{lem22}
Let $0\leq\theta_i \leq 1$, $\rho_i \in \mathbb{R}$, $2\leq q_i, r_i
\leq \infty,~i=1,2$. Assume that $(\theta_i,d,q_i,r_i)\neq
(0,3,2,\infty)$ satisfy the following admissible conditions
\begin{equation}
\left\{ \aligned \label{rl} 0\leq \frac{2}{q_i} &\leq
\min\Big\{(d-1+\theta_i)(\frac{1}{2}-\frac{1}{r_i}),
1\Big\},~~~i=1,2
   \\
&\rho_1+(d+\theta_1)(\frac{1}{2}-\frac{1}{r_1})-\frac{1}{q_1} =\mu,
\\&\rho_2+(d+\theta_2)(\frac{1}{2}-\frac{1}{r_2})-\frac{1}{q_2}
=1-\mu.
\endaligned\right.
\end{equation}
Then, for $f \in H^\mu$, we have
\begin{align}\label{str1}
\big\| U(\cdot) f\big\|_{L^{q_1}\big(\mathbb{R}; B^{\rho_1}_{r_1, 2}
\big)} &\leq C \|f\|_{H^\mu};\\\label{str2}\big\| K\ast f
\big\|_{L^{q_1}\big(I; B^{\rho_1}_{r_1, 2} \big)} &\leq C\big\|  f
\big\|_{L^{q_2'}\big(I; B^{-\rho_2}_{r'_2, 2} \big)};\\\label{str3}
\big\| K_{R}\ast f \big\|_{L^{q_1}\big(I; B^{\rho_1}_{r_1,  2}
\big)} &\leq C\big\|  f \big\|_{L^{q_2'}\big(I; B^{-\rho_2}_{r'_2,
2} \big)}.
\end{align}
where the subscript $R$ stands for retarded, and
\begin{align*}
K*f&=\int_{\mathbb{R}}K(t-s)f(u(s))ds,\\
K_R\ast f&=\int_{0}^tK(t-s)f(u(s))ds.
\end{align*}

\end{lemma}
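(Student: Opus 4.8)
The plan is to produce the two ingredients of the abstract Strichartz machinery for the propagator $U(t)=e^{it\omega}$ — the uniform $L^2$ bound and a dispersive decay estimate — and then to feed them into the $TT^\ast$ and duality arguments of Keel--Tao \cite{KeT98} and Ginibre--Velo \cite{GiV95}. First I would reduce everything to the single operator $U$ and to one Littlewood--Paley piece. Since $K(t)=(U(t)-U(-t))(2i\omega)^{-1}$ and $\omega\ge 1$, every bound for $K$ and $K_R$ follows from the corresponding bound for $U$, the bounded operator $\omega^{-1}$ contributing a shift of one derivative in the Besov indices — which is exactly why the two scaling relations in \eqref{str2}--\eqref{str3} add up to $1$ rather than to $0$, as they would for a pure $TT^\ast$ bound on the propagator $U$ itself. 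After the decomposition $f=\mathcal P_0 f+\sum_{j\ge1}\Delta_j f$ it then suffices to work, uniformly, on a single block (on the low block $\mathcal P_0 f$ the dyadic factors below are simply replaced by $1$) and to reassemble the inhomogeneous Besov norms as the $\ell^2_j$ square function; since $q_i,r_i\ge 2$ this last step is legitimate by Minkowski's inequality.

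The two basic estimates are $\|U(t)f\|_{L^2}=\|f\|_{L^2}$, because $e^{it\omega}$ is unitary, and Lemma \ref{lem21}, which, read on a Littlewood--Paley block, gives $\|U(t)\Delta_j f\|_{L^r}\lesssim \mu(t)\,2^{j(d+1+\theta)(\frac12-\frac1r)}\|\Delta_j f\|_{L^{r'}}$. Interpolating this $L^{r'}\to L^r$ estimate against $L^2$ conservation produces, for each admissible $(q,r,\theta)$, a decay bound on the kernel $U(t)U(s)^\ast=U(t-s)$ whose exponent, in the two regimes $|t-s|\le 1$ and $|t-s|\ge 1$, is matched precisely to the two branches of the minimum defining $\mu(t)$ and to the admissibility constraint $\frac2q\le\min\{(d-1+\theta)(\frac12-\frac1r),1\}$. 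Running the $TT^\ast$ argument of \cite{KeT98, GiV95} on each block with this frequency-dependent dispersive bound, and summing, then yields \eqref{str1} for every admissible pair, the sole exception being the Keel--Tao endpoint — which for $\theta=0$, $d=3$ is exactly $(q,r)=(2,\infty)$, whence the forbidden quadruple $(\theta_i,d,q_i,r_i)=(0,3,2,\infty)$ (vacuous in the range $d\ge5$ of this paper, but kept in the general statement; the non-endpoint cases already appear in Brenner and Ginibre--Velo \cite{Br85, GiV85b}).

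For the inhomogeneous estimates one writes $K\ast F=\tfrac1{2i}\,\omega^{-1}\bigl[U(t)\int U(-s)F(s)\,ds-U(-t)\int U(s)F(s)\,ds\bigr]$; extending $F$ by zero outside $I$, the time integral is bounded in $H^{1-\mu}$ by the adjoint of \eqref{str1} for the second pair (modulo the $\omega^{-1}$), and then $U(t)$ applied to it is bounded in $L^{q_1}(I;B^{\rho_1}_{r_1,2})$ by \eqref{str1} for the first pair, the extra $\omega^{-1}$ together with the derivative bookkeeping reproducing exactly the two relations imposed on $(\rho_i,q_i,r_i,\theta_i)$ — and this works even though the two admissible pairs may carry distinct $\theta_1,\theta_2$. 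This gives \eqref{str2}. The retarded estimate \eqref{str3} then follows from \eqref{str2} by the Christ--Kiselev lemma whenever the two time exponents are not conjugate; the remaining diagonal configurations, in particular the double-endpoint retarded estimate (genuinely present for $d\ge4$), are covered directly by the bilinear real-interpolation argument of \cite{KeT98, GiV95}, again performed blockwise and summed in $\ell^2_j$.

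The main obstacle is this last point: the bilinear inhomogeneous/retarded (endpoint) argument has to be carried out with constants uniform in the dyadic parameter $j$, one must check that the excluded endpoint does not creep back in through interpolation, and the derivative/integrability balance must come out exactly as stated when $\theta_1\ne\theta_2$. By contrast, the interpolation of the dispersive estimates and the $\ell^2_j$ reassembly of the Besov norms are routine.
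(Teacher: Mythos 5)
Your proposal is correct and follows exactly the route the paper indicates: the paper gives no written proof of Lemma \ref{lem22}, only the one-line statement that it follows from the dispersive bound of Lemma \ref{lem21} together with the "abstract duality and interpolation argument" of \cite{GiV95,KeT98}, with the estimates themselves attributed to \cite{Br85,GiV85b,MZF}. What you have written out — Littlewood--Paley reduction, interpolation of the dispersive bound against $L^2$ unitarity, the Keel--Tao $TT^\ast$ and bilinear argument blockwise with $\ell^2_j$ reassembly, the $\omega^{-1}$ in $K$ accounting for the extra unit of regularity so that the two scaling relations sum to $1$, the identification of the excluded quadruple $(0,3,2,\infty)$ as the Keel--Tao endpoint, duality for \eqref{str2}, and Christ--Kiselev plus the endpoint bilinear estimate for \eqref{str3} — is precisely the content of those citations unpacked.
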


In addition to the $W$-norm defined in \eqref{scattersize}, we also
need the following space
\begin{equation}
[W]^*(I)=L_t^{\frac{2(d+1)}{d+3}}\big(I;B^{\frac12}_{\frac{2(d+1)}{d+3},2}(\mathbb{R}^d)\big).
\end{equation}

Now we give a nonlinear estimate which will be applied  to show the
small data scattering.
\begin{lemma}\label{full}
We have
\begin{align}\label{fullst2}
&\Big\|\big(V(\cdot)*|u|^2\big)v\Big\|_{[W]^*(I)}+\Big\|\big(V(\cdot)*(uv)\big)u\Big\|_{[W]^*(I)}\\\nonumber
\leq C &\big\| v \big\|_{[W](I)}\|u\|_{L_t^\infty(I;
\dot{H}^{1}_x)}^{\frac{2(d-3)}{d-1}}\|u\|_{[W](I)}
^{\frac{4}{d-1}}+C\big\| u
\big\|_{[W](I)}^{1+\frac{2}{d-1}}\|u\|_{L_t^\infty(I;
\dot{H}_x^1)}^{\frac{d-3}{d-1}}\|v\|_{L_t^\infty(I;
\dot{H}^{1}_x)}^{\frac{d-3}{d-1}}\|v\|_{[W](I)} ^{\frac{2}{d-1}}.
\end{align}
In particular,
\begin{equation}\label{nonlin}
\|(V(\cdot)*|u|^2)u\|_{[W]^*(I)}\leq
C\|u\|_{[W](I)}^{1+\frac{4}{d-1}}\|u\|_{L^\infty(I;H^1)}^{\frac{2(d-3)}{d-1}}.
\end{equation}
\end{lemma}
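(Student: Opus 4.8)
The plan is to estimate the two nonlocal nonlinearities by combining the Hardy--Littlewood--Sobolev inequality (to handle the convolution with $V(x)=|x|^{-4}$), a fractional Leibniz/product rule in Besov spaces (to distribute the half-derivative), and the defining Strichartz/Sobolev exponents so that everything collapses onto the $[W]$-norm and the energy norm. The second bound \eqref{nonlin} is just the special case $v=u$ of \eqref{fullst2} after noting $1+\tfrac{2}{d-1}+\tfrac{4}{d-1}=1+\tfrac{6}{d-1}$ does not match — so in fact \eqref{nonlin} follows from the first term of \eqref{fullst2} with $v=u$, using $\|u\|_{L^\infty_t\dot H^1}\le\|u\|_{L^\infty_tH^1}$; the second term of \eqref{fullst2} is then absorbed similarly. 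So the core task is the first inequality.

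First I would write $[W]^*(I)=L_t^{\frac{2(d+1)}{d+3}}(I;B^{1/2}_{\frac{2(d+1)}{d+3},2})$ and, by the fractional Leibniz rule for Besov spaces, bound $\|(V*|u|^2)v\|_{B^{1/2}_{q',2}}$ (with $q'=\frac{2(d+1)}{d+3}$) by $\|V*|u|^2\|_{L^{a}}\|v\|_{B^{1/2}_{b,2}}+\|V*|u|^2\|_{B^{1/2}_{a,2}}\|v\|_{L^{b}}$ for Hölder-conjugate pairs $\frac1{q'}=\frac1a+\frac1b$. For the convolution factor, Hardy--Littlewood--Sobolev gives $\|V*|u|^2\|_{L^a}\lesssim\||u|^2\|_{L^{c}}=\|u\|_{L^{2c}}^2$ with $\frac1c=\frac1a+\frac{d-4}{d}$, and similarly $\|V*|u|^2\|_{B^{1/2}_{a,2}}\lesssim\|u\|_{L^{2c}}\|u\|_{B^{1/2}_{2c,2}}$ via the product rule again. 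Then each $L^{2c}$-norm of $u$ is split by interpolation between $L^\infty_t\dot H^1_x$ (i.e. the Sobolev line $\dot H^1\hookrightarrow L^{2^*}$) and the $[W]$-norm (which controls $L_t^{\frac{2(d+1)}{d-1}}B^{1/2}_{\frac{2(d+1)}{d-1},2}$, hence by Sobolev embedding a space-time Lebesgue norm); the same interpolation handles $\|u\|_{B^{1/2}_{2c,2}}$ in time. Choosing the interpolation parameters so that the total spatial exponents are admissible and the time-integrability matches $L_t^{q'}$ forces exactly the exponents $\tfrac{2(d-3)}{d-1}$ on the energy norm and $\tfrac{4}{d-1}$ on the $[W]$-norm in the first term, and the corresponding split $\tfrac{d-3}{d-1}+\tfrac{d-3}{d-1}$, $\tfrac{2}{d-1}+\tfrac{2}{d-1}$ in the mixed term $(V*(uv))u$; the latter is estimated identically with the roles of one $u$ and $v$ interchanged inside the convolution.

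The main obstacle will be bookkeeping the exponents: one must verify that the chosen $a,b,c$ and interpolation weights simultaneously (i) satisfy the HLS constraint for the $|x|^{-4}$ kernel in dimension $d$, (ii) keep all Besov indices in the admissible Strichartz range of Lemma~\ref{lem22} and within the Sobolev-embedding range for $B^{1/2}_{\frac{2(d+1)}{d-1},2}$, and (iii) sum to the time exponent $\frac{2(d+1)}{d+3}$ — and that these are consistent precisely when $d\ge5$ (this is where the dimension restriction enters, since for $d=4$ the kernel $|x|^{-4}$ is too singular for HLS). A secondary technical point is justifying the fractional Leibniz rule in the inhomogeneous Besov scale $B^{1/2}_{r,2}$ with the low-frequency piece $\mathcal P_0$ handled separately; this is standard (cf.\ \cite{BL76}) but should be stated. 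Once the exponent arithmetic checks out, the inequality follows by a direct chain of Hölder in time, HLS and product rule in space, and interpolation.
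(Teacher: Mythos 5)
Your proposal follows essentially the same route as the paper: apply a fractional Leibniz rule to split the half-derivative between the convolution factor and the outer factor, handle the convolution with the weak-$L^{d/4}$ kernel via Young/HLS, and interpolate the resulting $L_t^k L_x^s$ factors between $L_t^\infty\dot H^1_x$ and the $[W]$-norm to land on the exponents $\frac{2(d-3)}{d-1}$ and $\frac{4}{d-1}$; this is exactly what the paper does, with the concrete choice $q=r=\frac{2(d+1)}{d-1}$, $k=d+1$, $\delta(s)=1+\frac1k$. One small correction: your arithmetic claiming the second term of \eqref{fullst2} with $v=u$ gives exponent $1+\frac{6}{d-1}$ is off — substituting $v=u$ into that term yields $\|u\|_{[W]}^{1+\frac{2}{d-1}+\frac{2}{d-1}}\|u\|_{L^\infty\dot H^1}^{\frac{d-3}{d-1}+\frac{d-3}{d-1}}=\|u\|_{[W]}^{1+\frac{4}{d-1}}\|u\|_{L^\infty\dot H^1}^{\frac{2(d-3)}{d-1}}$, which matches the first term exactly, so \eqref{nonlin} is the literal specialization $v=u$ with no absorption needed.
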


\begin{proof}
We only need to prove the estimate
$\|(V(\cdot)*|u|^2)v\|_{[W]^*(I)}$, since the estimate
$\|(V(\cdot)*(uv))u\|_{[W]^*(I)}$ is similar. From
  the Sobolve embedding: $W^{s,p}(\R^d)\hookrightarrow B^s_{p,2}(\R^d),~p\leq2;
 B^s_{q,2}(\R^d)\hookrightarrow  W^{s,q}(\R^d),~q\geq2,$ the fractional Leibnitz rule \cite{KV1}, and the H\"{o}lder and the
Young inequalities, we have
\begin{align}\nonumber
&\big\|(V*|u|^2)v\big\|_{L^{ q'}\big(I; B^{1/2}_{ {r}', 2}
\big)}\\\label{Bony}\nonumber \lesssim& \big\| V \big\|_{L^p_\ast}
\big\| v \big\|_{L^{q}\big(I; B^{1/2}_{r, 2} \big)} \big\|u
\big\|^2_{L^k(I; L^s)}+ \big\| V \big\|_{L^p_\ast} \big\| u
\big\|_{L^{q}\big(I; B^{1/2}_{r, 2} \big)} \big\|u \big\|_{L^k(I;
L^s )}\big\|v \big\|_{L^k(I; L^s )},
\end{align}
where  the exponents satisfy
\begin{equation}\label{er}
\left\{ \aligned \frac{d}{p} & = 2 \delta(r) + 2\delta(s), \\
\frac{2}{q}& + \frac{2}{k} =1.
\endaligned \right.
\end{equation}
Since $V(x)=|x|^{-4}\in L_\ast^\frac{d}4,$ if we take admissible
pair $q=r=\frac{2(d+1)}{d-1}$ and $\delta(s)=1+\frac1{k}$ (then
$\delta(r)=\frac{d}{d+1},k=d+1$), then
\begin{align}\label{eq1.4}
\|(V*|u|^2)v\|_{L^{q^\prime}(I;B^{\frac12}_{q^\prime,2})}\lesssim\big\|
v \big\|_{[W](I)} \big\|u \big\|^2_{L^{k}(I; L^{s} )}+ \big\| u
\big\|_{[W](I)} \big\|u \big\|_{L^{k}(I; L^{s} )}\big\|v
\big\|_{L^{k}(I; L^{s} )}.
\end{align}
The H\"older inequality and the Sobolev embedding theorem yield that
\begin{equation}\label{eq1.5}\|v \big\|_{L^{k}(I; L^{s}
)}\leq\|v\|_{L_t^\infty
L^{2^*}_x}^{\frac{d-3}{d-1}}\|v\|_{L_t^{\frac{2(d+1)}{d-1}}L_x^{\frac{2d(d+1)}{d^2-2d-1}}}
^{\frac{2}{d-1}}\lesssim\|v\|_{L_t^\infty
\dot{H}^{1}_x}^{\frac{d-3}{d-1}}\|v\|_{[W](I)} ^{\frac{2}{d-1}}.
\end{equation}
Plugging \eqref{eq1.5} into \eqref{eq1.4}, we get
\begin{equation*}\|(V*|u|^2)v\|_{L^{q^\prime}(I;B^{\frac12}_{q^\prime,2})}\lesssim\big\| v \big\|_{[W](I)}\|u\|_{L_t^\infty
\dot{H}^{1}_x}^{\frac{2(d-3)}{d-1}}\|u\|_{[W](I)}
^{\frac{4}{d-1}}+\big\| u
\big\|_{[W](I)}^{1+\frac{2}{d-1}}\|u\|_{L_t^\infty
\dot{H}_x^1}^{\frac{d-3}{d-1}}\|v\|_{L_t^\infty
\dot{H}^{1}_x}^{\frac{d-3}{d-1}}\|v\|_{[W](I)} ^{\frac{2}{d-1}}.
\end{equation*}

Thus we complete the proof of Lemma \ref{full}.
\end{proof}

Now, we can state the local well-posedness for $(\ref{equ1})$ with
large initial data and small data scattering in the energy space
$H^1\times L^2$.

\begin{theorem}[small data scattering]\label{small}
Assume $d\geq 5,$  and $(u_0,u_1)\in H^1(\mathbb{R}^d)\times
L^2(\mathbb{R}^d)$. There exists a small constant $\delta=\delta(E)$
such that if $\|(u_0,u_1)\|_{ H^1\times L^2}\leq E$ and $I$ is an
interval such that
$$\|\dot{K}(t)u_0 + K(t)u_1\|_{W(I)}\leq
\delta,$$ then there exists a unique strong solution $u$ to
\eqref{equ1} in $I\times \mathbb{R}^d$, with $u\in C(I;H^1)\cap
C^1(I;L^2)$ and
\begin{equation}\label{smalll}
\|u\|_{W(I)}\leq 2C\delta. \end{equation} Let
$(T_-(u_0,u_1),T_+(u_0,u_1))$ be the maximal time interval on which
$u$ is well-defined.
\end{theorem}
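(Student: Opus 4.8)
The plan is to obtain $u$ as the unique fixed point of the Duhamel map
\[
\Phi(u)(t)=\dot K(t)u_0+K(t)u_1-\int_0^t K(t-s)f(u(s))\,ds
\]
on a small ball of the Strichartz space $W(I)$, i.e.\ the classical Kato-type contraction argument, with the linear input furnished by Lemma \ref{lem22} and the nonlinear input by Lemma \ref{full}. Writing $u^{\mathrm{lin}}(t)=\dot K(t)u_0+K(t)u_1$ for the free evolution, the Strichartz estimate \eqref{str1} (applied to the pieces $e^{\pm it\omega}$ making up $V_0$) gives $\|u^{\mathrm{lin}}\|_{W(I)}\lesssim\|(u_0,u_1)\|_{H^1\times L^2}$, and the retarded estimate \eqref{str3} controls the Duhamel term, so that for any interval $I\ni 0$,
\[
\|\Phi(u)\|_{W(I)}+\sup_{t\in I}\|(\Phi(u)(t),\dot\Phi(u)(t))\|_{H^1\times L^2}\le C\|(u_0,u_1)\|_{H^1\times L^2}+C\|f(u)\|_{[W]^*(I)}.
\]
The last term is handled by \eqref{nonlin}, giving the bound $C\|u\|_{W(I)}^{1+\frac{4}{d-1}}\|u\|_{L_t^\infty(I;H^1)}^{\frac{2(d-3)}{d-1}}$.

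Next I would set up the complete metric space. Take the ball
\[
B=\Big\{u:\ \|u\|_{W(I)}\le 2C\delta,\quad \sup_{t\in I}\|(u(t),\dot u(t))\|_{H^1\times L^2}\le 2CE\Big\},
\]
with distance $d(u,v)=\|u-v\|_{W(I)}+\sup_{t\in I}\|(u(t)-v(t),\dot u(t)-\dot v(t))\|_{H^1\times L^2}$; the $L_t^\infty$ component is kept in the metric because Lemma \ref{full}, being proved by interpolating the $L_t^kL_x^s$ norm between $W$ and $L_t^\infty\dot H^1$ as in \eqref{eq1.5}, yields a Lipschitz bound for $f$ that sees both norms. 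Under the hypothesis $\|u^{\mathrm{lin}}\|_{W(I)}\le\delta$ the estimates above give, for $u\in B$,
\[
\|\Phi(u)\|_{W(I)}\le\delta+C(2C\delta)^{1+\frac{4}{d-1}}(2CE)^{\frac{2(d-3)}{d-1}}\le 2C\delta,
\]
together with the analogous bound $\sup_t\|(\Phi(u),\dot\Phi(u))\|_{H^1\times L^2}\le CE+(\text{small})\le 2CE$, provided $\delta=\delta(E)$ is chosen small; hence $\Phi(B)\subseteq B$, and $B$ is complete for $d$ since $W(I)$- and $L_t^\infty(H^1\times L^2)$-norm limits preserve the defining bounds and (by uniform convergence) membership in $C(I;H^1)\cap C^1(I;L^2)$. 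For the contraction, write, using that $u,v$ are real-valued,
\[
f(u)-f(v)=\big(V*|u|^2\big)(u-v)+\big(V*\big((u-v)(u+v)\big)\big)v,
\]
and apply the bilinear estimate \eqref{fullst2} (after the obvious symmetrization in the second summand, which the proof of Lemma \ref{full} permits verbatim): every factor involving $u-v$ contributes a $\|u-v\|_{W(I)}$ or, through an $L_t^kL_x^s$ factor, a $\|u-v\|_{L_t^\infty\dot H^1}$, while the remaining factors are bounded by $2C\delta$ or $4CE$. This yields $\|f(u)-f(v)\|_{[W]^*(I)}\le C\,\theta(\delta,E)\,d(u,v)$ with $\theta(\delta,E)\to 0$ as $\delta\to 0$ for fixed $E$, hence by \eqref{str3} the map $\Phi$ is a contraction on $(B,d)$ once $\delta(E)$ is small.

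The contraction mapping principle then produces the unique $u\in B$ solving \eqref{inte1}; this is the strong solution, $u\in B$ is exactly the bound \eqref{smalll}, and $(u,\dot u)\in C(I;H^1\times L^2)$ follows from the Duhamel formula together with strong continuity of $V_0(t)$ on $H^1\times L^2$ and dominated convergence for the (Strichartz-finite) integral term. Uniqueness in the full strong-solution class of Definition \ref{def1.1} follows by applying the same Lipschitz estimate on subintervals short enough that both $W$-norms are small, plus a connectedness argument; and the maximal interval $(T_-(u_0,u_1),T_+(u_0,u_1))$ is then defined as the union of all open intervals containing $0$ on which a strong solution exists, unambiguous by this uniqueness. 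I would also record that $\|u^{\mathrm{lin}}\|_{W(I)}\to 0$ as $|I|\to 0$ (the $W$-norm is an $L^p$-in-time norm with $p<\infty$), so the hypothesis on $I$ is always met on a short interval, and when $E$ is small one may take $I=\R$, yielding the global solution and, via the Duhamel formula, the accompanying scattering statement.

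The step I expect to be the real (though still modest) obstacle is the exponent bookkeeping rather than any conceptual point: one must check that in Lemma \ref{full} the power of the Strichartz norm $\|u\|_{W(I)}$ in front is strictly positive ($1+\frac{4}{d-1}$ and $\frac{2}{d-1}$ are), so that the nonlinear and difference terms are genuinely perturbative for small $\delta$, while the energy norm $\|u\|_{L_t^\infty H^1}$ appears only to nonnegative powers and is therefore frozen at the constant $\sim E$ and never allowed to grow. This is exactly why the admissible pair $q=r=\frac{2(d+1)}{d-1}$ defining $W$ (and its dual defining $[W]^*$) is chosen: it is the pair for which Lemma \ref{full} closes with a positive power of the scattering norm and only bounded powers of the energy. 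Everything else is a routine application of Lemmas \ref{lem22} and \ref{full} and the contraction principle.
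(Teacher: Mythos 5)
The paper states Theorem \ref{small} without proof (it is the standard small-data contraction argument, with the blow-up criterion afterward cited to \cite{KM}), and your proposal reconstructs exactly that intended argument: a Banach fixed point for the Duhamel map on a ball in $W(I)\cap L_t^\infty(H^1\times L^2)$, with Lemma \ref{lem22} furnishing the linear bounds and Lemma \ref{full} (via \eqref{fullst2}--\eqref{nonlin}) furnishing the nonlinear and difference bounds. The exponent bookkeeping you flag is the only real checkpoint, and you have it right: the superlinear power $1+\tfrac{4}{d-1}$ of $\|u\|_{W}$ in \eqref{nonlin}, together with the fact that the split powers $\tfrac{d-3}{d-1}+\tfrac{2}{d-1}=1$ on $u-v$ recombine to a single power of the metric by weighted AM--GM, is precisely what makes the map a contraction for small $\delta(E)$.
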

\begin{remark}\label{rem2.3}
(1) There exists $\tilde{\delta}$ such that if
$\|(u_0,u_1)\|_{{H}^1\times L^2}\leq\tilde{\delta},$ the conclusion
of Theorem \ref{small} applies to any interval $I.$ Indeed, by
Strichartz estimates, $\|\dot{K}(t)u_0 + K(t)u_1\|_{W(I)}\leq
C\tilde{\delta}$ and the claim follows.

(2) Given $(u_0,u_1)\in H^1\times L^2,$ there exists $(0\in) I$ such
that the hypothesis of Theorem \ref{small} is verified on $I$. This
is clear because, by Strichartz estimates, $\|\dot{K}(t)u_0 +
K(t)u_1\|_{W(\R)}<\infty.$
\end{remark}

Finally, we conclude this subsection by recalling the following
standard finite blow-up criterion.
\begin{lemma}[Standard finite blow-up criterion]\label{criterion}
If  $T_+(u_0,u_1)<+\infty,$ then
$$\|u\|_{W\big([0,T_+(u_0,u_1))\big)}=+\infty.$$
A corresponding result holds for $T_-(u_0,u_1)$.
\end{lemma}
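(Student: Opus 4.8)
The plan is to argue by contraposition: assume $T_+ := T_+(u_0,u_1) < +\infty$ while $\|u\|_{W([0,T_+))} < +\infty$, and show that $u$ can then be extended beyond $T_+$, contradicting maximality. The first ingredient is the a priori energy bound. Since the Hartree term $\tfrac14\iint |u(t,x)|^2|u(t,y)|^2|x-y|^{-4}\,dx\,dy$ is nonnegative, conservation of $E(u,\dot u)$ forces $\tfrac12\|(u(t),\dot u(t))\|_{H^1\times L^2}^2 \le E(u_0,u_1)$, so
$$M := \sup_{t\in[0,T_+)}\|(u(t),\dot u(t))\|_{H^1\times L^2} \le \sqrt{2\,E(u_0,u_1)} < +\infty .$$
In particular the $L_t^\infty H_x^1$ factor that appears in the nonlinear estimate \eqref{nonlin} is controlled uniformly over the lifespan.

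Next I would show that $(u(t),\dot u(t))$ converges in $H^1\times L^2$ as $t\uparrow T_+$. Put $w(t):=V_0(-t)\big(u(t),\dot u(t)\big)$; the Duhamel formula \eqref{inte2} together with the group law for $V_0$ gives, for $0\le t_1<t_2<T_+$,
$$w(t_2)-w(t_1) = -\int_{t_1}^{t_2} V_0(-s)\,\big(0,\,f(u(s))\big)\,ds .$$
Since $V_0(s)$ is an isometry of $H^1\times L^2$ (equipped with the Klein--Gordon energy norm), the Strichartz estimate \eqref{str2} and the bound \eqref{nonlin} yield
$$\big\|w(t_2)-w(t_1)\big\|_{H^1\times L^2} \lesssim \big\|f(u)\big\|_{[W]^*([t_1,t_2])} \lesssim \|u\|_{W([t_1,t_2])}^{\,1+\frac{4}{d-1}}\, M^{\frac{2(d-3)}{d-1}} .$$
As $\|u\|_{W([0,T_+))}<\infty$, the tail $\|u\|_{W([t_1,T_+))}$ tends to $0$ when $t_1\uparrow T_+$, so $(w(t))_{t<T_+}$ is Cauchy and converges to some $w_+\in H^1\times L^2$. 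Writing $(u(t),\dot u(t))=V_0(t)w(t)$ and using that $V_0$ is strongly continuous and uniformly bounded on $H^1\times L^2$, it follows that $(u(t),\dot u(t))\to (v_0,v_1):=V_0(T_+)w_+$ in $H^1\times L^2$ as $t\uparrow T_+$.

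Finally I would restart the flow from time $T_+$ with data $(v_0,v_1)\in H^1\times L^2$: by Remark \ref{rem2.3}(2) (equivalently Theorem \ref{small}) there is a strong solution $\wt{u}$ of \eqref{equ1} on an interval $[T_+,T_++\eps)$ with $\wt u\in W([T_+,T_++\eps))$. Concatenating $u$ and $\wt u$ at $t=T_+$ is legitimate because $\wt u(T_+)=v_0=\lim_{t\uparrow T_+}u(t)$, and the group law for $V_0$ then merges the two Duhamel identities \eqref{inte2}, \eqref{duhamel} into a single one on $[0,T_++\eps)$, while the local theory of Section~2 guarantees that the concatenation stays in $C^0_t(H^1\times L^2)\cap W$ on compact subintervals; this produces a strong solution of \eqref{equ1} on $[0,T_++\eps)$, contradicting the maximality of $T_+$. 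Hence $\|u\|_{W([0,T_+(u_0,u_1)))}=+\infty$; the statement for $T_-$ follows by applying the same argument to the time-reversed solution $t\mapsto u(-t)$, which again solves \eqref{equ1}.

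I expect the convergence step to be the main point requiring care: the scaling-critical nonlinear estimate \eqref{nonlin} does not close on the $W$-norm alone, so one must combine the energy-type Strichartz estimate of Lemma~\ref{lem22} with the a priori $L_t^\infty H_x^1$ bound coming from energy conservation; and one must verify that the glued function genuinely belongs to the solution class of Definition~\ref{def1.1} (the $W$-membership on compact subintervals and the continuity in the energy space across $T_+$), which is a consequence of the local well-posedness theory but should be spelled out.
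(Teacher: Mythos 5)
Your proposal is correct and follows the same standard route that the paper invokes by citing Lemma~2.11 of Kenig--Merle: use energy conservation plus the dual Strichartz estimate and the nonlinear bound \eqref{nonlin} to show $(u(t),\dot u(t))$ is Cauchy in $H^1\times L^2$ as $t\uparrow T_+$, then restart the local theory from the limiting data to contradict maximality. The paper gives no details, so your write-up simply fleshes out the argument the authors are referencing.
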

The proof is similar to the one in Lemma 2.11 of \cite{KM}.

\subsection{Perturbation lemma}
In this part, we give the perturbation theory of the solution of
\eqref{equ1} with the global space-time estimate. First we recall
some notations in \cite{IMN}.

 With any real-valued function
$u(t,x)$, we associate the complex-valued function $\vec{u}(t,x)$ by
\begin{equation}
\vec{u}=\langle\nabla\rangle u-i\dot{u},\quad
u=\Re\langle\nabla\rangle^{-1}\vec{u}.
\end{equation}
Then the free and nonlinear Klein-Gordon equations are given by
\begin{align}
\begin{cases}
(\Box+1)u=0\Longleftrightarrow(i\partial_t+\langle\nabla\rangle)\vec{u}=0,\\
(\Box+1)u=-f(u)\Longleftrightarrow(i\partial_t+\langle\nabla\rangle)\vec{u}=-f(\langle\nabla\rangle^{-1}\Re\vec{u}),
\end{cases}
\end{align}
and the energy are written as
$$\tilde{E}(\vec{u})=E(u,\dot{u})=\frac12 \int_{\mathbb{R}^d} \big(\big|\dot{u}
\big|^2 +\big| \nabla u \big|^2 + \big| u \big|^2 \big)dx +
\frac{1}{4} \iint_{\mathbb{R}^d\times\mathbb{R}^d}
 \frac{|
u(t,x)|^2 |u(t,y)|^2}{|x-y|^4} dxdy.$$

\begin{lemma}\label{long}  Let $I$ be a
time interval, $t_0\in I$ and $\vec{u},\vec{w}\in C(I;L^2(\R^d))$
satisfy
\begin{align*}
(i\partial_t+\langle\nabla\rangle)\vec{u}=&-f(u)+eq(u)\\
(i\partial_t+\langle\nabla\rangle)\vec{w}=&-f(w)+eq(w).
\end{align*}
for some function $eq(u),eq(w)$. Assume that for some constants
$M,E>0$, we have
\begin{align}\label{eq2.20}
\|w\|_{ST(I)}\leq M,\\\label{equ2.201}  \|\vec{u}\|_{L_t^\infty
L^2_x(I\times \R^d)}+\|\vec{w}\|_{L_t^\infty L^2_x(I\times
\R^d)}\leq E,
\end{align}
 Let $t_0\in I$, and let $(u(t_0),u_t(t_0))$ be
close to $(w(t_0),w_t(t_0))$ in the sense that
\begin{equation}\label{eq2.21}
\big\|\big(u(t_0)-w(t_0),u_t(t_0)-w_t(t_0)\big)\big\|_{H^1\times
L^2}\leq E'.
\end{equation}
Let
$\vec{\gamma}_0=e^{i\langle\nabla\rangle(t-t_0)}(\vec{u}-\vec{w})(t_0)$
and assume also that we have smallness conditions
\begin{equation} \label{eq2.22}
\|\gamma_0\|_{ST(I)}+\|(eq(u),eq(w))\|_{ST^*(I)}\leq\epsilon,
\end{equation}
where $0<\epsilon<\epsilon_1=\epsilon_1( M, E)$ is a small constant
and $$ST^*(I)=[W]^\ast(I)+L_t^1(I;L_x^2(\R^d)).$$ Then we conclude
that
\begin{equation}\label{eq2.23}
\begin{aligned}
\|u-w\|_{ST(I)}\leq & C(M,E)\epsilon,\\
\|u\|_{ST(I)}\leq & C(M,E,E').
\end{aligned}
\end{equation}
\end{lemma}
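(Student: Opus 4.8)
The plan is to prove the long-time perturbation Lemma \ref{long} by a bootstrap/continuity argument, treating $u$ as a perturbation of $w$ and iterating the Strichartz estimates \eqref{str2}--\eqref{str3} together with the nonlinear estimate of Lemma \ref{full}. First I would reduce to the case of small $M$: since $\|w\|_{ST(I)}\leq M$, partition $I$ into $N=N(M,E)$ consecutive subintervals $I_1,\dots,I_N$ on each of which $\|w\|_{ST(I_j)}\leq\eta$ for a small absolute constant $\eta$ to be chosen, and run the argument inductively from $I_1$ to $I_N$, at each step updating the data-closeness constant (the error at the right endpoint of $I_j$ becomes the input error on $I_{j+1}$). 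This is the standard device that converts a short-time perturbation statement into the stated long-time one, and it is where the constant $C(M,E)$, respectively $C(M,E,E')$, is generated; one only needs that $\epsilon_1(M,E)$ shrinks appropriately with $N$.

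On a single small subinterval $J$ with $\|w\|_{ST(J)}\leq\eta$, set $v=u-w$. Then $v$ solves $(i\pa_t+\langle\nabla\rangle)\vec v=-(f(u)-f(w))+(eq(u)-eq(w))$ with $\vec v(t_0)$ giving rise to the free evolution $\vec\gamma_0$, so by Duhamel and the Strichartz estimates \eqref{str1}--\eqref{str3},
\begin{equation*}
\|v\|_{ST(J)}\leq C\|\gamma_0\|_{ST(J)}+C\|f(u)-f(w)\|_{ST^*(J)}+C\|(eq(u),eq(w))\|_{ST^*(J)}.
\end{equation*}
The middle term is controlled using the multilinear structure: writing $f(u)-f(w)=(V*|u|^2)u-(V*|w|^2)w$ and expanding in differences $v=u-w$, every summand is of the form $(V*(ab))c$ with each of $a,b,c$ equal to either $w$ or $v$ and at least one equal to $v$. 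Lemma \ref{full} (applied with the roles of $u,v$ there played by appropriate combinations, together with the a priori bounds $\|\vec u\|_{L^\infty_tL^2_x},\|\vec w\|_{L^\infty_tL^2_x}\leq E$ which bound the $L^\infty_t\dot H^1_x$ factors, and $\|w\|_{ST(J)}\leq\eta$, $\|v\|_{ST(J)}$, $\|u\|_{ST(J)}\leq\|v\|_{ST(J)}+\eta$) then gives
\begin{equation*}
\|f(u)-f(w)\|_{ST^*(J)}\leq C(E)\big(\eta+\|v\|_{ST(J)}\big)^{\frac{4}{d-1}}\|v\|_{ST(J)}.
\end{equation*}
Feeding this back, $\|v\|_{ST(J)}\leq C\|\gamma_0\|_{ST(J)}+C\epsilon+C(E)(\eta+\|v\|_{ST(J)})^{\frac{4}{d-1}}\|v\|_{ST(J)}$; choosing $\eta=\eta(E)$ and $\epsilon_1$ small enough that $C(E)(\eta+\cdots)^{4/(d-1)}\leq\tfrac12$ on the relevant range, a continuity-in-$J$ argument (the set of $t\in J$ where $\|v\|_{ST([t_0,t])}\leq 2C\epsilon$ is nonempty, closed, and open) yields $\|v\|_{ST(J)}\leq 2C\epsilon$, hence $\|u\|_{ST(J)}\leq 2C\epsilon+\eta$. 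Summing the contributions over the $N$ subintervals, and noting that the error propagated to the next interval is itself controlled by $\|v\|_{ST(I_j)}$ plus a free evolution bounded via \eqref{eq2.21} and the already-obtained estimates, gives \eqref{eq2.23} with the stated dependence of the constants.

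The main obstacle I expect is the bookkeeping in the nonlinear difference estimate, specifically verifying that \emph{every} term arising from the multilinear expansion of $(V*|u|^2)u-(V*|w|^2)w$ can indeed be absorbed into the scheme of Lemma \ref{full} — in particular that the $L^\infty_t\dot H^1_x$ norms appearing there (of $u$, $v$, or $w$) are all uniformly bounded by $E$ and $E'$ rather than by quantities one is trying to estimate, so that the exponent $\frac{4}{d-1}>0$ on the genuinely small factor $\|w\|_{ST}+\|v\|_{ST}$ does the work. A secondary subtlety is that $\gamma_0$ is defined as the global free evolution $e^{i\langle\nabla\rangle(t-t_0)}(\vec u-\vec w)(t_0)$, so on each subinterval $I_{j+1}$ one must re-express the relevant free evolution in terms of the true difference $(\vec u-\vec w)$ at the left endpoint of $I_{j+1}$ and estimate the discrepancy by the already-controlled Duhamel term on $I_j$; keeping the smallness threshold $\epsilon_1(M,E)$ consistent across all $N$ steps is the point that requires care but presents no essential difficulty.
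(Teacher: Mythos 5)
Your proposal follows essentially the same route as the paper's proof: partition $I$ into finitely many subintervals on which $\|w\|_{ST}$ is small, set $\gamma=u-w$ and track the free evolution $\vec\gamma_j(t)=e^{i\langle\nabla\rangle(t-t_j)}\vec\gamma(t_j)$ from the left endpoint of each piece, expand $f(u)-f(w)$ multilinearly so every term carries at least one $\gamma$, control it via Lemma~\ref{full} together with the $L^\infty_t H^1_x$ bound coming from \eqref{equ2.201}, close on each subinterval by bootstrap, and iterate the resulting recursion $\|\gamma\|_{ST(I_j)}+\|\gamma_{j+1}\|\le C\|\gamma_j\|+\epsilon$ across the $N$ pieces to get $\|\gamma\|_{ST(I)}\le(2C)^N\epsilon$. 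The only caveat is that your intermediate bound $\|f(u)-f(w)\|_{ST^*(J)}\lesssim_E(\eta+\|v\|_{ST(J)})^{4/(d-1)}\|v\|_{ST(J)}$ undercounts the contribution of terms such as $(V*(\gamma w))w$, for which Lemma~\ref{full} produces a factor $\eta^{1+2/(d-1)}\|\gamma\|_{ST}^{2/(d-1)}$ with a sub-unit power of $\|\gamma\|_{ST}$ that is not dominated by $\eta^{4/(d-1)}\|\gamma\|_{ST}$ when $\|\gamma\|_{ST}\ll\eta$; closing the bootstrap therefore requires choosing the bootstrap threshold $\delta$ small in terms of $\epsilon$ (and hence $\epsilon_1(M,E)$ small enough to keep $(2C)^N\epsilon\le\delta$), exactly the ``bookkeeping'' you flag, and this is handled at the same level of explicitness as in the paper's own argument.
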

\begin{proof}
Since $\|w\|_{ST(I)}\leq M$, there exists a partition of the right
half of $I$ at $t_0$:
$$t_0<t_1<\cdots<t_N,~I_j=(t_j,t_{j+1}),~I\cap(t_0,\infty)=(t_0,t_N),$$
such that $N\leq C(L,\delta)$ and for any $j=0,1,\cdots,N-1,$ we
have
\begin{equation}\label{ome}
\|w\|_{ST(I_j)}\leq\delta\ll1.
\end{equation}
The estimate on the left half of $I$ at $t_0$ is analogue, we omit
it.

Let
\begin{equation}
\gamma(t)=u(t)-w(t),~\vec{\gamma}_j(t)=e^{i\langle\nabla\rangle(t-t_j)}\vec
\gamma(t_j),~0\leq j\leq N-1,
\end{equation}
then $\gamma$ satisfies the following difference equation
\begin{align*}
\begin{cases}
(i\partial_t+\langle\nabla\rangle)\vec{\gamma}=&(V\ast|w|^2)\gamma+2\big[V\ast(\gamma
w)\big]+2\big[V\ast(\gamma
w)\big]\gamma\\&+(V\ast|\gamma|^2)w+(V\ast|\gamma|^2)\gamma+eq(u)-eq(w)\\
\vec{\gamma}(t_j)=\vec{\gamma}_j(t_j),
\end{cases}
\end{align*}
which implies that
\begin{align*}
\vec{\gamma}(t)=\vec{\gamma}_j(t)-i\int_{t_j}^te^{i\langle\nabla\rangle(t-s)}\Big(&(V\ast|w|^2)\gamma+2\big[V\ast(\gamma
w)\big]w+2\big[V\ast(\gamma
w)\big]\gamma\\&+(V\ast|\gamma|^2)w+(V\ast|\gamma|^2)\gamma+eq(u)-eq(w)\Big)ds,\\
\vec{\gamma}_{j+1}(t)=\vec{\gamma}_j(t)-i\int_{t_j}^{t_{j+1}}e^{i\langle\nabla\rangle(t-s)}\Big(&(V\ast|w|^2)\gamma+2\big[V\ast(\gamma
w)\big]w+2\big[V\ast(\gamma
w)\big]\gamma\\&+(V\ast|\gamma|^2)w+(V\ast|\gamma|^2)\gamma+eq(u)-eq(w)\Big)ds.
\end{align*}
By Lemma \ref{lem22} and Lemma \ref{full}, we have
\begin{align}\label{equ3}
&\|\gamma-\gamma_j\|_{ST(I_j)}+\|\gamma_{j+1}-\gamma_j\|_{ST(\R)}\\\nonumber
\lesssim&\big\|(V\ast|w|^2)\gamma+2\big[V\ast(\gamma
w)\big]w+2\big[V\ast(\gamma
w)\big]\gamma+(V\ast|\gamma|^2)w+(V\ast|\gamma|^2)\gamma\big\|_{[W]^\ast(I_j)}\\\nonumber&+\|(eq(u),eq(w))\|_{ST^\ast(I_j)}\\\nonumber
\lesssim &\|\gamma \|_{[W](I_j)}\|w\|_{L_t^\infty(I_j;
\dot{H}^{1}_x)}^{\frac{2(d-3)}{d-1}}\|w\|_{[W](I_j)}
^{\frac{4}{d-1}}+\big\| w
\big\|_{[W](I_j)}^{1+\frac{2}{d-1}}\|w\|_{L_t^\infty(I_j;
\dot{H}_x^1)}^{\frac{d-3}{d-1}}\|\gamma\|_{L_t^\infty(I_j;
\dot{H}^{1}_x)}^{\frac{d-3}{d-1}}\|\gamma\|_{[W](I_j)}
^{\frac{2}{d-1}}\\\nonumber &+\|w
\|_{[W](I_j)}\|\gamma\|_{L_t^\infty(I_j;
\dot{H}^{1}_x)}^{\frac{2(d-3)}{d-1}}\|\gamma\|_{[W](I_j)}
^{\frac{4}{d-1}}+\big\| \gamma
\big\|_{[W](I_j)}^{1+\frac{2}{d-1}}\|\gamma\|_{L_t^\infty(I_j;
\dot{H}_x^1)}^{\frac{d-3}{d-1}}\|w\|_{L_t^\infty(I_j;
\dot{H}^{1}_x)}^{\frac{d-3}{d-1}}\|w\|_{[W](I_j)}
^{\frac{2}{d-1}}\\\nonumber
&+\|\gamma\|_{[W](I_j)}^{1+\frac{4}{d-1}}\|\gamma\|_{L^\infty(I_j;H^1)}^{\frac{2(d-3)}{d-1}}+\|(eq(u),eq(w))\|_{ST^\ast(I_j)}.
\end{align}
Therefore, assuming that
\begin{equation}\label{laodong}
\|\gamma\|_{ST(I_j)}\leq\delta\ll1,~\forall~j=0,1,\cdots,N-1,
\end{equation}
then by \eqref{ome} and \eqref{equ3}, we have
\begin{equation}
\|\gamma\|_{ST(I_j)}+\|\gamma_{j+1}\|_{ST(t_{j+1},t_N)}\leq
C\|\gamma_j\|_{ST(t_j,t_N)}+\epsilon,
\end{equation}
for some absolute constant $C>0$. By \eqref{eq2.22} and iteration on
$j$, we obtain
\begin{equation}
\|\gamma\|_{ST(I)}\leq (2C)^N\epsilon\leq\frac{\delta}2,
\end{equation}
if we choose $\epsilon_1$ sufficiently small. Hence the assumption
\eqref{laodong} is justified by continuity in $t$ and induction on
$j$. Then repeating the estimate \eqref{equ3} once again, we can get
the ST-norm estimate on $\gamma$, which implies the Strichartz
estimates on $u$.
\end{proof}

\section{Profile decomposition}
\setcounter{section}{3}\setcounter{equation}{0}

In this section, we first recall the linear profile decomposition of
the sequence of $H^1$-bounded solutions of \eqref{equ1} which was
established in \cite{IMN}. And then we utilize it to show the
orthogonal analysis for the nonlinear energy and the nonlinear
profile decomposition which will be used to construct the critical
element and obtain its compactness properties.

 \subsection{Linear profile decomposition}
First, we give some notation as introduced in \cite{IMN}. For any
triple $(t_n^j,x_n^j,h_n^j)\in\R\times\R^d\times(0,\infty)$ with
arbitrary suffix $n$ and $j$, let $\tau_n^j,~T_n^j$, and
$\langle\nabla\rangle_n^j$ respectively denote the scaled time
shift, the unitary and the self-adjoint operators in $L^2(\R^d)$,
defined by
\begin{equation}\label{equ6.1}
\tau_n^j=-\frac{t_n^j}{h_n^j},~T_n^j\varphi(x)=(h_n^j)^{-\frac{d}{2}}\varphi\big(\frac{x-x_n^j}{h_n^j}\big),~\langle\nabla\rangle_n^j=\sqrt{-\Delta
+(h_n^j)^2}.
\end{equation}
 We denote the set of Fourier multipliers on
$$\mathcal{MC}=\big\{\mu=\mathcal{F}^{-1}\tilde{\mu}\mathcal{F}|\ \tilde{\mu}\in
C(\mathbb{R}^d),\exists
\lim\limits_{|x|\rightarrow\infty}\tilde{\mu}(x)\in\mathbb{R}\big\}.$$

Now we can state the linear profile decomposition as follows
\begin{lemma}[Linear profile decomposition,
\cite{IMN}]\label{lem3.1} Let $\vec{v}_n(t)=e^{i\langle\nabla\rangle
t}\vec{v}_n(0)$ be a sequence of free Klein-Gordon solutions with
uniformly bounded $L^2_x$ norm. Then after replacing it with some
subsequence, there exist $K\in\{0,1,2\ldots,\infty\}$ and, for each
integer $j\in[0,K)$, $\varphi^j\in L^2(\mathbb{R}^d)$ and $\{(t^j_n,
x^j_n,h_n^j)\}_{n\in\mathbb{N}}\subset\mathbb{R}\times\mathbb{R}^d\times(0,1]$
satisfying the following. Define $\vec{v}^j_n$ and
$\vec{\omega}^k_n$ for each $j<k\leq K$ by
\begin{equation}\begin{split}\label{equ3.1}
\vec{v}_n(t,x)=\sum\limits_{j=0}^{k-1}\vec{v}^j_n(t,x)+\vec{\omega}^k_n(t,x),\\
\vec{v}^j_n(t,x)=e^{i\langle\nabla\rangle(t-t^j_n)}T_n^j\varphi^j(x)=T_n^j\big(e^{i\langle\nabla\rangle_n^j\frac{t-t_n^j}{h_n^j}}\varphi^j\big)
,\end{split}
\end{equation}
then  we have
\begin{equation}\label{equ3.2}
\lim\limits_{k\rightarrow
K}\varlimsup\limits_{n\rightarrow\infty}\|\vec{\omega}^k_n\|_{L^\infty(\mathbb{R};B^{-\frac{d}{2}}_{\infty,\infty}(
\mathbb{R}^d))}=0,
\end{equation}
and for any $\mu\in \mathcal{MC}$, any $l<j<k\leq K$ and any $t\in
\mathbb{R}$,
\begin{align}\label{equ3.3}
\lim\limits_{n\rightarrow\infty}\langle\mu\vec{v}^l_n,
\mu\vec{v}^j_n\rangle_{L^2_x}^2=0=
\lim\limits_{n\rightarrow\infty}\langle\mu\vec{v}^j_n,
\mu\vec{\omega}^k_n\rangle_{L^2_x}^2,
\\\label{equ3.4}
\lim\limits_{n\rightarrow\infty}\Big|\frac{h_n^l}{h_n^j}\Big|+\Big|\frac{h_n^j}{h_n^l}\Big|+\frac{|t_n^j-t_n^k|+|x_n^j-x_n^k|}{h_n^l}=+\infty.
\end{align}
Moreover, each sequence $\{h_n^j\}_{n\in\mathbb{N}}$ is either going
to $0$ or identically $1$ for all $n$.
\end{lemma}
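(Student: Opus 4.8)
The plan is to follow the Bahouri--G\'erard concentration-compactness scheme, in the form adapted to the Klein--Gordon flow in \cite{IMN}: extract \emph{profiles} one at a time by a defect-of-compactness argument, using as the selecting norm the scaling-critical weak norm $L^\infty_t(\R;B^{-\frac{d}{2}}_{\infty,\infty}(\R^d))$. This space is chosen because, under the group generated by time translations, space translations, and the $L^2_x$-unitary dilations $T_n^j$ of \eqref{equ6.1}, it has the same homogeneity as $L^2_x$, so it measures precisely the defect of $L^2_x$-compactness of the family $\vec v_n(0)$ modulo these symmetries.

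First I would set $\vec\omega^0_n=\vec v_n$ and proceed inductively: assuming $\vec\omega^k_n$ has been constructed, put $\eta_k=\varlimsup_{n\to\infty}\|\vec\omega^k_n\|_{L^\infty_t(\R;B^{-\frac{d}{2}}_{\infty,\infty})}$; if $\eta_k=0$ I stop and set $K=k$. Otherwise the key analytic input is a refined (``inverse'') estimate producing, after passing to a subsequence, parameters $(t_n^k,x_n^k,h_n^k)\in\R\times\R^d\times(0,1]$ and a nonzero $\varphi^k\in L^2(\R^d)$ with
\[
(T_n^k)^{-1}\vec\omega^k_n(t_n^k)\rightharpoonup\varphi^k\quad\text{weakly in }L^2_x,\qquad \|\varphi^k\|_{L^2_x}\gtrsim\eta_k .
\]
Here the operator $\langle\nabla\rangle_n^k=\sqrt{-\Delta+(h_n^k)^2}$ entering \eqref{equ3.1} is exactly the generator obtained by conjugating $e^{it\langle\nabla\rangle}$ with $T_n^k$ and rescaling time by $h_n^k$, so it interpolates between the massless wave generator $\sqrt{-\Delta}$ when $h_n^k\to0$ and $\langle\nabla\rangle$ when $h_n^k\equiv1$; the restriction $h_n^k\le1$ reflects the mass term, since on scales coarser than the mass scale the flow degenerates to the ODE $\ddot u+u=0$ and supports no concentration. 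After a further subsequence each $\{h_n^k\}_n$ is either identically $1$ or tends to $0$. I would then define $\vec v^k_n$ via \eqref{equ3.1} and set $\vec\omega^{k+1}_n=\vec\omega^k_n-\vec v^k_n$, so that by construction $(T_n^k)^{-1}\vec\omega^{k+1}_n(t_n^k)\rightharpoonup0$.

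Next I would establish the parameter orthogonality \eqref{equ3.4} by induction on the pair $(l,j)$, $l<j$: if it failed, along a subsequence the frame change relating the $l$-th and $j$-th profiles would converge to a fixed element of the symmetry group, and applying it to the weak-convergence relation defining $\varphi^j$ would exhibit $\varphi^j$ as a dilate and translate of material already extracted at step $l$ --- impossible, since $\vec\omega^j_n$ has vanishing weak limit in every earlier frame. Granting \eqref{equ3.4}, the generalized Pythagorean expansion for the $L^2_x$ norms gives $\sum_j\|\varphi^j\|_{L^2_x}^2\le\sup_n\|\vec v_n(0)\|_{L^2_x}^2<\infty$; combined with $\|\varphi^k\|_{L^2_x}\gtrsim\eta_k$ this forces $\eta_k\to0$ as $k\to K$, which is \eqref{equ3.2}. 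For the remaining cross terms in \eqref{equ3.3} I would use \eqref{equ3.4} together with the defining property of $\mathcal{MC}$: since each $\mu\in\mathcal{MC}$ has a limit at infinity, conjugating it by the frames (in particular when $h_n^j\to0$) turns $\mu$ into a scalar plus an error small in operator norm on the relevant frequency annuli, after which the divergence of the frames forces $\langle\mu\vec v^l_n,\mu\vec v^j_n\rangle_{L^2_x}$ and $\langle\mu\vec v^j_n,\mu\vec\omega^k_n\rangle_{L^2_x}$ to tend to $0$.

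The hardest step will be the inverse estimate at the extraction stage: realizing a lower bound on $\|\vec\omega^k_n\|_{L^\infty_t B^{-\frac{d}{2}}_{\infty,\infty}}$ as a nontrivial weak $L^2_x$ limit after applying the symmetries, while correctly tracking the variable-mass generator $\langle\nabla\rangle_n^j$ and the dichotomy $h_n^j\to0$ versus $h_n^j\equiv1$ (together with the companion fact that $\mathcal{MC}$-multipliers decouple along divergent frames). Since this is precisely the linear profile decomposition proved in \cite{IMN}, in the body of the paper one simply invokes that result; the above indicates the route its proof takes.
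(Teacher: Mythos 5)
The paper does not give a proof of Lemma~\ref{lem3.1}: it is quoted verbatim from \cite{IMN}, as you yourself observe at the end of your proposal. Your sketch correctly reconstructs the Bahouri--G\'erard defect-of-compactness scheme that \cite{IMN} carry out --- inductive profile extraction against the scaling-critical weak norm $L^\infty_t B^{-\frac{d}{2}}_{\infty,\infty}$, the inverse estimate giving a nontrivial weak $L^2_x$ limit after applying the symmetries, the conjugated generator $\langle\nabla\rangle_n^j$ and the dichotomy $h_n^j\equiv 1$ versus $h_n^j\to 0$, the frame-orthogonality argument, the Pythagorean/small-remainder conclusion, and the $\mathcal{MC}$-multiplier decoupling --- so it is consistent with the source the paper invokes.
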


\begin{remark}
We call $\{\vec{v}_n^j\}_{n\in\mathbb{N}}$ a free concentrating wave
for each $j$, and $\vec{w}_n^k$ the remainder. From \eqref{equ3.3},
we have the following asymptotic orthogonality
\begin{equation}\label{orth}
\lim\limits_{n\rightarrow+\infty}\Big(\|\mu\vec{v}_n(t)\|_{L^2}^2-\sum\limits_{j=0}^{k-1}\|\mu\vec{v}_n^j(t)\|_{L^2}^2
-\|\mu\vec{\omega}_n^k(t)\|_{L^2}^2\Big)=0,\quad \forall~\mu\in
\mathcal{MC}.
\end{equation}
\end{remark}
Next we begin with the orthogonal analysis for the nonlinear energy.
It follows from Mikhlin's theorem that the following estimates for
$1<p<\infty,$
\begin{align}\label{equ1.6.1}
\big\|\big[|\nabla|-\langle\nabla\rangle_n\big]\varphi\big\|_p\lesssim&
h_n\big\|\langle\nabla/h_n\rangle^{-1}\varphi\big\|_p,\\\label{equ1.6.2}
\big\|\big[|\nabla|^{-1}-\langle\nabla\rangle_n^{-1}\big]\varphi\big\|_p\lesssim&
\big\|\langle\nabla/h_n\rangle^{-2}|\nabla|^{-1}\varphi\big\|_p,
\end{align}
hold uniformly for $0<h_n\leq1.$
\begin{lemma}\label{energy}
Let $\vec{v}_n$ be a sequence of free Klein-Gordon solutions
satisfying $\vec{v}_n(0)\in L^2_x$. Let
$\vec{v}_n=\sum\limits_{j=0}^{k-1}\vec{v}^j_n+\vec{\omega}_n^k$ be
the linear profile decomposition given by Lemma \ref{lem3.1}. If
$\varlimsup\limits_{n\rightarrow\infty}\tilde{E}(\vec{v}_n(0))<+\infty$,
then we have $\vec{v}_n^j(0)\in L^2_x$ for large $n$, and
\begin{equation}\label{equ3.5}
\lim\limits_{k\rightarrow
K}\varlimsup\limits_{n\rightarrow\infty}\Big|\tilde{E}(\vec{v}_n(0))-\sum\limits_{j=0}^{k-1}
\tilde{E}(\vec{v}^j_n(0))-\tilde{E}(\vec{\omega}^k_n(0))\Big|=0.
\end{equation}
Moreover, we have for all $j<k$
\begin{equation}\label{equ3.6}
0\leq\varliminf\limits_{n\rightarrow\infty}\tilde{E}(\vec{v}^j_n(0))\leq\varlimsup\limits_{n\rightarrow
\infty}\tilde{E}(\vec{v}^j_n(0))\leq\varlimsup\limits_{n\rightarrow
\infty}\tilde{E}(\vec{v}_n(0)),
\end{equation}
where the last inequality becomes equality only if $K=1$ and
$\vec{\omega}^1_n\rightarrow 0$ in $L^\infty_t L^2_x$.

\end{lemma}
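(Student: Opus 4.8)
The plan is to decompose the nonlinear energy $\tilde E(\vec v_n(0))$ into its quadratic part $\frac12\|\vec v_n(0)\|_{L^2}^2$ (which is exactly the linear energy of the free solution, since $\tilde E_{\mathrm{lin}}(\vec u)=\frac12\|\vec u\|_{L^2}^2$ in the $\langle\nabla\rangle u - i\dot u$ variables) and the nonlinear potential term $\frac14\iint |x-y|^{-4}|u_n(0,x)|^2|u_n(0,y)|^2\,dx\,dy$, and to prove the asymptotic additivity of each piece separately. For the quadratic part, additivity is immediate from the asymptotic orthogonality \eqref{orth} with $\mu=\mathrm{Id}\in\mathcal{MC}$: one gets $\|\vec v_n(0)\|_{L^2}^2 = \sum_{j<k}\|\vec v_n^j(0)\|_{L^2}^2 + \|\vec\omega_n^k(0)\|_{L^2}^2 + o(1)$ as $n\to\infty$, and in particular $\vec v_n^j(0)\in L^2_x$ for large $n$ with $\varlimsup_n\|\vec v_n^j(0)\|_{L^2}^2\le \varlimsup_n\|\vec v_n(0)\|_{L^2}^2$. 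The only subtlety here is passing from the two-index orthogonality in \eqref{equ3.3} to the simultaneous decomposition \eqref{orth}; this is the standard Pythagorean expansion and I would simply invoke it.

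The heart of the matter is the nonlinear (Hartree) term. Writing $u_n = \Re\langle\nabla\rangle^{-1}\vec v_n$, $u_n^j=\Re\langle\nabla\rangle^{-1}\vec v_n^j$, $\omega_n^k = \Re\langle\nabla\rangle^{-1}\vec\omega_n^k$, I would first replace $\langle\nabla\rangle^{-1}$ inside each profile by $h_n$-adapted operators: by \eqref{equ6.1}, $\vec v_n^j = T_n^j(e^{i\langle\nabla\rangle_n^j \tau}\varphi^j)$, and using \eqref{equ1.6.2} the error between $\langle\nabla\rangle^{-1}$ and $\langle\nabla\rangle_n^{-1}$ applied to a fixed profile is negligible in the relevant Sobolev norm; this lets me work with the genuinely scale-covariant objects $T_n^j(|\nabla|^{-1}\cdot)$ up to $o(1)$. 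Then, using the convolution $V=|x|^{-4}\in L^{d/4}_*$ and the Hardy--Littlewood--Sobolev inequality together with Sobolev embedding (exactly as in the proof of Lemma~\ref{full}), the potential energy is controlled by $\dot H^1$ norms of the $u_n^j$, which are in turn comparable to $\|\vec v_n^j(0)\|_{L^2}$. The key algebraic step is to expand the quartic form $Q(a,b,c,d)=\iint |x-y|^{-4} a(x)b(x)c(y)d(y)\,dx\,dy$ with $a=b=c=d = \sum_{j<k} u_n^j + \omega_n^k$ into the sum of the ``diagonal'' terms $Q(u_n^j,u_n^j,u_n^j,u_n^j)$ plus the diagonal remainder term $Q(\omega_n^k,\ldots)$, and to show every mixed term tends to $0$.

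The mixed terms are the main obstacle, and I would handle them by exploiting the orthogonality \eqref{equ3.4} of the parameters $(h_n^j,t_n^j,x_n^j)$. There are two mechanisms. First, for two distinct profiles $j\ne l$, the scaling/space/time separation forces the spatial supports (after truncation) to separate or the frequency supports to separate, so that $Q$ of a product involving both $u_n^j$ and $u_n^l$ vanishes in the limit; this is where one must be slightly careful because $V$ is long-range (it only decays like $|x|^{-4}$, not compactly supported), so the argument is: approximate each $\varphi^j$ by a Schwartz function, use that $T_n^j$ sends it to a bump of width $h_n^j$ centered at $x_n^j$ in the $h_n^j=o(1)$ case (or a fixed bump translated by $x_n^j\to\infty$ in the $h_n^j\equiv1$ case), and estimate $Q$ directly via Hardy--Littlewood--Sobolev, gaining a factor that is a negative power of the separation parameter $|x_n^j-x_n^l|/h_n^j + |h_n^j/h_n^l|+\cdots \to\infty$. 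Second, for terms involving the remainder $\omega_n^k$, I would use \eqref{equ3.2}: the $B^{-d/2}_{\infty,\infty}$ smallness of $\vec\omega_n^k$ interpolates with its uniform $\dot H^1$ (equivalently $L^2$) bound to give smallness in the Lebesgue/Besov norm needed to make $Q(u_n^j,\ldots,\omega_n^k)\to0$ after sending $n\to\infty$ and then $k\to K$. Finally, \eqref{equ3.6} follows: the left inequality because each $\tilde E(\vec v_n^j(0))\ge 0$ (energy is manifestly nonnegative in the defocusing case), the right inequality from \eqref{equ3.5} together with nonnegativity of all the other summands and of $\tilde E(\vec\omega_n^k(0))$; and equality in the last inequality of \eqref{equ3.6} forces all other profiles and the remainder to carry zero energy, hence (since energy controls $\|\cdot\|_{L^2}$ from above and below for bounded data) $K=1$ and $\vec\omega_n^1\to0$ in $L^\infty_tL^2_x$.
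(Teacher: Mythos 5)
Your proposal follows essentially the same line of attack as the paper: split $\tilde E$ into the quadratic part, handled by the Pythagorean expansion \eqref{orth} with $\mu=\mathrm{Id}$, and the Hartree potential term, handled by showing (a) the remainder contributes nothing because $\omega_n^k\to0$ in $L^{2^*}_x$ via interpolation between the $H^1$ bound and the $B^{1-d/2}_{\infty,\infty}$ smallness \eqref{equ3.2}, (b) cross-terms between distinct profiles vanish via the orthogonality \eqref{equ3.4}, after replacing $\langle\nabla\rangle^{-1}$ by the scale-adapted operator using \eqref{equ1.6.1}--\eqref{equ1.6.2}. For the cross-terms you propose approximation of $\varphi^j$ by Schwartz functions plus a quantitative decay estimate in the separation parameters, whereas the paper truncates each rescaled profile $\hat\psi^j$ by a product of cutoffs $\hat\psi^j_{n,R}$ engineered so that the supports become mutually disjoint for large $n$, and then invokes dominated convergence; both achieve the same goal, with the paper's construction being somewhat cleaner because it sidesteps any quantitative rate.

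There is one genuine gap in your argument: you never address the time shifts $\tau_n^j=-t_n^j/h_n^j$. You treat $v_n^j(0)=\Re\langle\nabla\rangle^{-1}e^{-it_n^j\langle\nabla\rangle}T_n^j\varphi^j$ as if, after undoing $T_n^j$ and replacing $\langle\nabla\rangle$ by $|\nabla|$ or $\langle\nabla\rangle_n^j$, you were looking at a fixed profile. But the inner factor $e^{i\langle\nabla\rangle_n^j\tau_n^j}\varphi^j$ depends on $n$ and need not converge. The paper deals with this by a preliminary reduction: if $|\tau_n^j|\to\infty$, the $L^{2^*}_x$ dispersive decay of $e^{it\langle\nabla\rangle}$ forces $\|v_n^j(0)\|_{L^{2^*}_x}\to0$, so that profile contributes nothing to the potential energy and can be discarded; after passing to a subsequence one may then assume $\tau_n^j\to\tau_\infty^j\in\R$ for every surviving $j$ and set $\psi^j=\Re e^{-i\langle\nabla\rangle_\infty^j\tau_\infty^j}\varphi^j$, which is the ``fixed profile'' your argument implicitly requires. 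Without this step, neither your Schwartz approximation nor the support-separation mechanism applies, because the object to be truncated is not a convergent sequence in $L^{2^*}_x$. Once this reduction is inserted, the rest of your outline is sound and matches the paper's.
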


\begin{proof}
First, we claim that
\begin{equation}\label{claim}
\|u\|_{L_x^{2^\ast}}\lesssim\|u\|_{H^1}^\frac{d-2}d\|u\|_{B^{1-\frac{d}2}_{\infty,\infty}}^\frac2d,\quad
2^\ast=\frac{2d}{d-2}.
\end{equation}
In fact, on one hand, by the H\"older and Bernstein equalities, we
have
$$\|P_{\leq 1}u\|_{L_x^{2^\ast}}\lesssim\|P_{\leq 1}u\|_{L_x^2}^\frac{d-2}d\|P_{\leq
1}u\|_{L_x^\infty}^\frac2d\lesssim\|u\|_{H^1}^\frac{d-2}d\|u\|_{B^{1-\frac{d}2}_{\infty,\infty}}^\frac2d,$$
On the other hand, from the sharp interpolation \cite{BCD}, we know
$$\|P_{>1}u\|_{L_x^{2^\ast}}\lesssim\|P_{>1}u\|_{L_x^2}^\frac{d-2}d\|P_{>
1}u\|_{\dot{B}^{1-\frac{d}2}_{\infty,\infty}}^\frac2d\lesssim\|u\|_{H^1}^\frac{d-2}d\|u\|_{B^{1-\frac{d}2}_{\infty,\infty}}^\frac2d,$$
which concludes the claim.

Thus, by \eqref{claim} and \eqref{equ3.2}, we obtain
$$\lim\limits_{k\rightarrow
K}\varlimsup\limits_{n\rightarrow\infty}\|\omega^k_n\|_{L_x^{2^\ast}}\leq\lim\limits_{k\rightarrow
K}\varlimsup\limits_{n\rightarrow\infty}\|\omega^k_n\|_{H^1}^\frac{d-2}d\|\omega^k_n\|_{B^{1-\frac{d}2}_{\infty,\infty}}^\frac2d=0,$$
where  $\omega^k_n=\Re \langle\nabla\rangle^{-1}\vec{\omega}^k_n$.
This implies that, if there exists $i\in\{1,2,3,4\}$ such that
$u_i=\omega_n^k$, then by the H\"{o}lder and the
Hardy-Littlewood-sobolev inequalities, we get
\begin{align*}
\lim\limits_{k\rightarrow
K}\varlimsup\limits_{n\rightarrow\infty}\|\big(V(x)*(u_1u_2)\big)(u_3u_4)\|_{L^1_x}\leq
\lim\limits_{k\rightarrow
K}\varlimsup\limits_{n\rightarrow\infty}\prod\limits_{i=1}^{4}\|u_i\|_{L_x^{2^\ast}}
=0.
\end{align*}
This together with \eqref{orth} reduces us to prove
\begin{equation}\label{reduce}
\lim\limits_{k\rightarrow
K}\varlimsup\limits_{n\rightarrow\infty}\Big|F\big(\sum_{j<k}v_n^j(0)\big)-\sum_{j<k}F\big(v_n^j(0)\big)\Big|=0,\end{equation}
where $F(u)=\big\|(V(x)*|u|^2)|u|^2\big\|_{L_x^1}.$

Moreover, using the decay of $e^{it\langle\nabla\rangle}$ in
$\mathcal{S}\rightarrow L^{2^\ast}_x$ uniform w.r.t. $n$ and the
Sobolev embedding $\dot{H}^1(\R^d)\subset L^{2^\ast}(\R^d)$, we have
$$\|v_n^j\|_{L_x^{2^\ast}}\leq\|\langle\nabla\rangle^{-1}e^{-i\langle\nabla\rangle_n^j\tau^j_n}\varphi^j(x)\|_{L_x^{2^\ast}}\rightarrow
0,\ \text{as}\ n\rightarrow\infty.$$ Thus, we can discard those $j$
where $\tau_n^j=-\frac{t_n^j}{h_n^j}\rightarrow+\infty.$

Hence, up to subsequence, we may assume that $\tau_n^j\to\exists
\tau_\infty^j\in\R$ for all $j$. Let
\begin{equation}
\psi^j:=\Re
e^{-i\langle\nabla\rangle_\infty^j\tau_\infty^j}\varphi^j\in
L_x^2(\R^d),
\end{equation}
we have
\begin{align}\nonumber
&\Big|F\big(\sum_{j<k}v_n^j(0)\big)-\sum_{j<k}F\big(v_n^j(0)\big)\Big|\\
\leq&\Big|F\big(\sum_{j<k}v_n^j(0)\big)-F\big(\sum_{j<k}\langle\nabla\rangle^{-1}T_n^j\psi^j\big)\Big|\\
&+\Big|\sum_{j<k}F\big(v_n^j(0)\big)-\sum_{j<k}F\big(\langle\nabla\rangle^{-1}T_n^j\psi^j\big)\Big|\\\label{equ3.20}
&+\Big|F\big(\sum_{j<k}\langle\nabla\rangle^{-1}T_n^j\psi^j\big)-\sum_{j<k}F\big(\langle\nabla\rangle^{-1}T_n^j\psi^j\big)\Big|.
\end{align}
By the continuity of the operator $e^{it\langle\nabla\rangle}$ in
$t$ in $H^1$, we have
$$v_n^j(0)-\langle\nabla\rangle^{-1}T_n^j\psi^j\to0~\text{in}~H^1(\R^d),~\text{as}~n\to\infty.
$$
This together with the following nonlinear estimate
\begin{equation}\label{non}
\big\|\big(V(\cdot)\ast(g_1g_2)\big)g_3g_4\big\|_{L_x^1}\lesssim\prod\limits_{j=1}^4\|g_j\|_{L_x^{2^\ast}}\end{equation}
show that as $n\to\infty,$
\begin{align*}
\Big|F\big(\sum_{j<k}v_n^j(0)\big)-F\big(\sum_{j<k}\langle\nabla\rangle^{-1}T_n^j\psi^j\big)\Big|\to0,\\
\Big|\sum_{j<k}F\big(v_n^j(0)\big)-\sum_{j<k}F\big(\langle\nabla\rangle^{-1}T_n^j\psi^j\big)\Big|\to0.
\end{align*}

Now we consider the term \eqref{equ3.20}. Let
\begin{align*}
\hat{\psi}^j= \begin{cases}|\nabla|^{-1}\psi^j,\quad
\text{if}~h_n^j\to0\\
\langle\nabla\rangle^{-1}\psi^j,\quad \text{if}~h_n^j\equiv1,
\end{cases}
\end{align*}
then we have $\hat{\psi}^j\in L^{2^\ast}_x$, and
\begin{align}\nonumber
&\Big|F\big(\sum_{j<k}\langle\nabla\rangle^{-1}T_n^j\psi^j\big)-\sum_{j<k}F\big(\langle\nabla\rangle^{-1}T_n^j\psi^j\big)\Big|\\
\lesssim&\Big|F\big(\sum_{j<k}\langle\nabla\rangle^{-1}T_n^j\psi^j\big)-F\big(\sum_{j<k}h_n^jT_n^j\hat{\psi}^j\big)\Big|\\
&+\Big|\sum_{j<k}F\big(\langle\nabla\rangle^{-1}T_n^j\psi^j\big)-\sum_{j<k}F\big(h_n^jT_n^j\hat{\psi}^j\big)\Big|\\
&+\Big|F\big(\sum_{j<k}h_n^jT_n^j\hat{\psi}^j\big)-\sum_{j<k}F\big(h_n^jT_n^j\hat{\psi}^j\big)\Big|.
\end{align}
By \eqref{equ1.6.1}, one has
\begin{align*}
\big\|\langle\nabla\rangle^{-1}T_n^j\psi^j-h_n^jT_n^j\hat{\psi}^j\big\|_{L_x^{2^\ast}}=&\begin{cases}
\big\|\langle\nabla\rangle^{-1}T_n^j\psi^j-h_n^jT_n^j|\nabla|^{-1}\psi^j\big\|_{L_x^{2^\ast}}~~\text{if}~~h_n^j\to0\\
\big\|\langle\nabla\rangle^{-1}T_n^j\psi^j-h_n^jT_n^j\langle\nabla\rangle^{-1}\psi^j\big\|_{L_x^{2^\ast}}~~\text{if}~~h_n^j\equiv1
\end{cases}\\
=&\begin{cases} \big\|(\langle\nabla\rangle_n^j)^{-1}\psi^j-|\nabla|^{-1}\psi^j\big\|_{L_x^{2^\ast}}~~\text{if}~~h_n^j\to0\\
0~~\text{if}~~h_n^j\equiv1
\end{cases}\\
\rightarrow&\quad 0,~\text{as}~~n\to\infty.
\end{align*}
Combining this with \eqref{non}, we obtain  that as $n\to\infty,$
\begin{align*}
\Big|F\big(\sum_{j<k}\langle\nabla\rangle^{-1}T_n^j\psi^j\big)-F\big(\sum_{j<k}h_n^jT_n^j\hat{\psi}^j\big)\Big|\to0,\\
\Big|\sum_{j<k}F\big(\langle\nabla\rangle^{-1}T_n^j\psi^j\big)-\sum_{j<k}F\big(h_n^jT_n^j\hat{\psi}^j\big)\Big|\to0.
\end{align*}
Thus it suffices to show that as $n\to\infty$
\begin{equation}\label{guijie}
\Big|F\big(\sum_{j<k}h_n^jT_n^j\hat{\psi}^j\big)-\sum_{j<k}F\big(h_n^jT_n^j\hat{\psi}^j\big)\Big|\rightarrow
0.
\end{equation}
Now we define $\hat{\psi}^j_{n,R}$ for any $R\gg1$ by
$$\hat{\psi}^j_{n,R}(x)=\chi_R(x)\hat{\psi}^j\prod\big\{(1-\chi_{h_n^{j,l}R})(x-x_{n}^{j,l})~\big|~1\leq
l<k,~h_n^lR<h_n^j\big\},$$ where
$(h_n^{j,l},x_n^{j,l})=(h_n^l,x_n^j-x_n^l)/h_n^j,$ and
$\chi_R(x)=\chi(\frac{x}{R})$ with $\chi(x)\in
C^\infty_c(\mathbb{R}^{d})$ satisfing $\chi(x)=1$ for $|x|\leq 1$
and $\chi(x)=0$ for $|x|\geq 2$. Then
$\hat{\psi}_{n,R}^j\to\chi_R\hat{\psi}^j$ in $L_x^{2^\ast}$
 as $n\to\infty$, since either $h_n^{j,l}\to0$ or
$|x_n^{j,l}|\to\infty$ by \eqref{equ3.4}. Moreover, we have
$\chi_R\hat{\psi}^j\to\hat{\psi}^j$ in $L_x^{2^\ast}$
 as $R\to\infty$.

Hence we may replace $\hat{\psi}^j$ by $\hat{\psi}^j_{n,R}$ in
\eqref{guijie}. Since
$\big\{\text{supp}_{(t,x)}h_n^jT_n^j\hat{\psi}^j_{n,R}\big\}$ are
mutually disjoint for large $n$, and so for large $n$
\begin{equation}
\big|\sum_{j<k}h_n^jT_n^j\hat{\psi}^j_{n,R}\big|^2=\sum_{j<k}\big|h_n^jT_n^j\hat{\psi}^j_{n,R}\big|^2.
\end{equation}
Then
\begin{align*}
&\Big|F\big(\sum_{j<k}h_n^jT_n^j\hat{\psi}^j_{n,R}\big)-\sum_{j<k}F\big(h_n^jT_n^j\hat{\psi}^j_{n,R}\big)\Big|\\
\leq&\sum_{j\neq
l}\Big\|\big(V(\cdot)\ast|h_n^jT_n^j\hat{\psi}^j_{n,R}|^2\big)|h_n^lT_n^l\hat{\psi}^l_{n,R}|^2\Big\|_{L_x^1(\R^d)}\\
=&\sum_{j\neq
l}\big(h_{n}^{j,l}\big)^{2-d}\Big\|\big(V(\cdot)\ast|\hat{\psi}^j_{n,R}|^2\big)
|\hat{\psi}^l_{n,R}\big(\frac{x-x_n^{j,l}}{h_n^{j,l}}\big)|^2\Big\|_{L_x^1(\R^d)}\\
\to&\quad 0,~\text{as}~n\to\infty,
\end{align*}
by Lebesgue dominated convergence theorem, since either
$h_n^{j,l}\to0$ or $|x_n^{j,l}|\to\infty$ by \eqref{equ3.4}. This
concludes the proof of Lemma \ref{energy}.
\end{proof}

\subsection{Nonlinear profile decomposition}
After the linear profile decomposition of a sequence of initial data
in the last subsection, we now show the nonlinear profile
decomposition of a sequence of the solutions of \eqref{equ1} with
the same initial data in the energy space $H^1(\mathbb{R}^d)\times
L^2(\mathbb{R}^d)$  by following the argument in  \cite{IMN}.

First we construct a nonlinear profile corresponding to a free
concentrating wave. Let $\vec{v}_n$ be a free concentrating wave for
a sequence $(t_n,x_n,h_n)\in\R\times\R^d\times(0,1]$,
\begin{align}
\begin{cases}
(i\partial_t+\langle\nabla\rangle)\vec{v}_n=0,\\
\vec{v}_n(t_n)=T_n\phi(x),\ \phi(x)\in L^2(\R^d).
\end{cases}
\end{align}
Then by Lemma \ref{lem3.1}, we have a sequence of the free
concentrating wave $\vec{v}_n^j(t,x)$ with
$\vec{v}_n^j(t_n^j)=T_n^j\varphi^j,~\varphi^j\in L^2(\R^d)$ for
$j=0,1,\cdots,k-1,$ such that
\begin{align*}
\vec{v}_n(t,x)=&\sum_{j=0}^{k-1}\vec{v}^j_n(t,x)+\vec{\omega}^k_n(t,x)\\
=&\sum_{j=0}^{k-1}e^{i\langle\nabla\rangle(t-t^j_n)}T_n^j\varphi^j(x)+\vec{\omega}^k_n(t,x)\\
=&\sum_{j=0}^{k-1}T_n^je^{i\big(\frac{t-t^j_n}{h_n^j}\big)\langle\nabla\rangle_n^j}\varphi^j+\vec{\omega}^k_n(t,x).
\end{align*}

Now for any concentrating wave $\vec{v}_n^j$, we undo the group
action $T_n^j$ to look for the linear profile $\vec{V}^j$. Let
$$\vec{v}_n^j(t,x)=T_n^j\vec{V}_n^j\big((t-t_n^j)/h_n^j\big),$$
then we have
$$\vec{V}_n^j(t,x)=e^{it\langle\nabla\rangle_n^j}\varphi^j.$$

Now let $\vec{u}_n^j$ be the nonlinear solution with the same
initial data $\vec{v}_n^j(0)$
\begin{align}
\begin{cases}
(i\partial_t+\langle\nabla\rangle)\vec{u}_n^j=-f\big(\Re\langle\nabla\rangle^{-1}\vec{u}_n^j\big),\\
\vec{u}_n^j(0)=\vec{v}_n^j(0)=T_n^j\vec{V}^j_n(\tau_n^j),
\end{cases}
\end{align}
where $\tau_n^j=-t_n^j/h_n^j.$ In order to look for the nonlinear
profile $\vec{U}_\infty^j$ associate with the free concentrating
wave $\vec{v}_n^j$, we also need undo the group action. Define
$$\vec{u}_n^j(t,x)=T_n^j\vec{U}_n^j\big((t-t_n^j)/h_n^j\big),$$
then $\vec{U}^j_n$ satisfies the rescaled equation
\begin{align*}\begin{cases}
(i\partial_t+\langle\nabla\rangle_n^j)\vec{U}_n^j=-f\big(\Re(\langle\nabla\rangle_n^j)^{-1}\vec{U}_n^j\big),\\
\vec{U}_n^j(\tau_n^j)=\vec{V}^j_n(\tau_n^j).
\end{cases}
\end{align*}

Up to subsequence, we may assume that there exist
$h_\infty^j\in\{0,1\}$ and $\tau_\infty^j\in[-\infty,\infty]$ for
every $j$, such that as $n\to\infty$
$$h_n^j\rightarrow h^j_\infty,~\text{and}~\tau_n^j\to\tau_\infty^j.$$
And then the limit equations are given by
\begin{align*}\vec{V}_\infty^j=e^{it\langle\nabla\rangle_\infty^j}\varphi^j,\quad
\begin{cases}
(i\partial_t+\langle\nabla\rangle_\infty^j)\vec{U}_\infty^j=-f\big(\hat{U}_\infty^j\big),\\
\vec{U}_\infty^j(\tau_\infty^j)=\vec{V}^j_\infty(\tau_\infty^j),
\end{cases}
\end{align*}
where $\hat{U}_\infty^j$ is defined by
\begin{align}\label{un1}
\hat{U}_\infty^j:=\Re(\langle\nabla\rangle_\infty^j)^{-1}\vec{U}_\infty^j=
\begin{cases}
\Re\langle\nabla\rangle^{-1}\vec{U}_\infty^j~~\text{if}~~h^j_\infty=1,\\
\Re|\nabla|^{-1}\vec{U}_\infty^j~~\text{if}~~h^j_\infty=0.
\end{cases}
\end{align}

We remark that  by using the standard iteration with the Strichartz
estimate, we can obtain the unique existence of a local solution
$\vec{U}_\infty^j$ around $t=\tau_\infty^j$ in all cases, including
$h_\infty^j=0$ and $\tau_\infty^j=\pm\infty$ (the later
corresponding to the existence of the wave operators). We denote
$\vec{U}_\infty^j$ on the maximal existence interval to be the
nonlinear profile associated with the free concentrating wave
$(\vec{v}_n^j;t_n^j,x_n^j,h_n^j)$.

The nonlinear concentrating wave $\vec{u}_{(n)}^j$ associated with
$\vec{v}_n^j$ is defined by
\begin{equation}\label{un}
\vec{u}_{(n)}^j(t,x):=T_n^j\vec{U}_\infty^j\big((t-t_n^j)/h_n^j\big).
\end{equation}
It is easy to see that  $u_{(n)}^j$ solves \eqref{equ1} when
$h_\infty^j=1$. When $h_\infty^j=0,$   $u_{(n)}^j$ solves
\begin{align*}\begin{cases}
(\partial_{tt}-\Delta+1)u^j_{(n)}=(i\partial_t+\langle\nabla\rangle)\vec{u}_{(n)}^j
=\big(\langle\nabla\rangle-|\nabla|\big)\vec{u}_{(n)}^j-f\big(|\nabla|^{-1}\langle\nabla\rangle
u_{(n)}^j\big),\\
\vec{u}_{(n)}^j(0)=T_n^j\vec{U}_\infty^j(\tau_n^j).
\end{cases}
\end{align*}
The existence time interval of $u_{(n)}^j$ may be finite and even go
to $0$, but at least we have
\begin{equation}\label{lea}
\begin{split}
\|\vec{u}_n^j(0)&-\vec{u}_{(n)}^j(0)\|_{L_x^2}=\big\|T_n^j\vec{V}^j_n(\tau_n^j)-T_n^j\vec{U}_\infty^j(\tau_n^j)\big\|_{L_x^2}\\
\leq&\big\|\vec{V}^j_n(\tau_n^j)-\vec{V}_\infty^j(\tau_n^j)\big\|_{L_x^2}+\big\|\vec{V}^j_\infty(\tau_n^j)-\vec{U}_\infty^j(\tau_n^j)\big\|_{L_x^2}
\to0,
\end{split}
\end{equation}
as $n\to\infty.$

Let $u_n$ be a sequence of (local) solutions of \eqref{equ1} around
$t=0$, and let $v_n$ be the sequence of the free solutions with the
same initial data. We consider the linear profile decomposition of
$\{\vec{v}_n\}$ given by Lemma \ref{lem3.1},
\begin{equation*}
\vec{v}_n=\sum\limits_{j=0}^{k-1}\vec{v}^j_n+\vec{\omega}^k_n,\quad
\vec{v}^j_n=e^{i\langle\nabla\rangle(t-t^j_n)}T_n^j\varphi^j.
\end{equation*}

\begin{definition}
[Nonlinear profile decomposition] Let
$\{\vec{v}_n^j\}_{n\in\mathbb{N}}$ be the free concentrating wave,
and $\{\vec{u}_{(n)}^j\}_{n\in\mathbb{N}}$ be the sequence of the
nonlinear concentrating wave associated with
$\{\vec{v}_n^j\}_{n\in\mathbb{N}}$. Then we define the nonlinear
profile decomposition of $u_n$ by
\begin{equation}\label{nonlineard}
\vec{u}_{(n)}^{<k}:=\sum\limits_{j=0}^{k-1}\vec{u}_{(n)}^j=\sum_{j=0}^{k-1}T_n^j\vec{U}_\infty^j\big((t-t_n^j)/h_n^j\big).
\end{equation}
\end{definition}

We will show that $\vec{u}_{(n)}^{<k}+\vec{\omega}_n^k$ is a good
approximation for $\vec{u}_n$, provided that each nonlinear profile
has finite global Strichartz norm.

Next we introduce some Strichartz norms. Let $ST(I)$ and
$ST^\ast(I)$ be the functions spaces on $I\times\R^d$ defined as
above
\begin{align*}
ST(I)=&[W](I)=L_t^{\frac{2(d+1)}{d-1}}(I;B^{\frac12}_{\frac{2(d+1)}{d-1},2}(\mathbb{R}^d)),\\
ST^\ast(I)=&[W]^*(I)+L_t^1(I;L^2(\R^d)).
\end{align*}
The Strichartz norm for the nonlinear profile $\hat{U}_\infty^j$
depends on the scaling $h_\infty^j$
\begin{align}
ST_\infty^j(I):= \begin{cases} ST(I)~~\qquad\qquad\text{if}~~h_\infty^j=1,\\
L_t^q(I;\dot{B}^\frac12_{q,2})~~\big(q=\frac{2(d+1)}{d-1}\big)~~\text{if}~~h_\infty^j=0.
\end{cases}
\end{align}

The following two lemmas derive from Lemma \ref{lem3.1} and the
perturbation lemma. The first lemma concerns the orthogonality in
the Strichartz norms.
\begin{lemma}\label{lem3.3}
Assume that in the nonlinear profile decomposition
\eqref{nonlineard}, we have
\begin{equation}\label{equ3.7}
\|\hat{U}^j_\infty\|_{ST_\infty^j(\mathbb{R})}+\|\vec{U}^j_\infty\|_{L_t^\infty
L^2_x(\mathbb{R})}<\infty,\ \forall j<k.
\end{equation}
Then, for any finite interval $I, j<k,$ we have
\begin{align}\label{equ3.8}
\varlimsup\limits_{n\rightarrow\infty}\|u_{(n)}^j\|_{ST(I)}&\lesssim\|\hat{U}_\infty^j\|_{ST_\infty^j(\mathbb{R})},\\\label{equ3.9}
\varlimsup\limits_{n\rightarrow\infty}\|u_{(n)}^{<k}\|_{ST(I)}^2&\lesssim\varlimsup\limits_{n\rightarrow\infty}
\sum\limits_{j=0}^{k-1}\big\|u_{(n)}^j\big\|_{ST(\mathbb{R})}^2,
\end{align}
where the implicit constants do not depend on the interve $I$ or
$j$. We also have
\begin{equation}\label{equ3.10}
\lim\limits_{n\rightarrow\infty}\bigg\|f\big(u_{(n)}^{<k}\big)-\sum\limits_{j=0}^{k-1}
f\big((\langle\nabla\rangle_\infty^j)^{-1}\langle\nabla\rangle
u_{(n)}^j\big)\bigg\|_{ST^*(I)}=0,
\end{equation}
where $f(u)=(V(x)\ast|u|^2)u.$
\end{lemma}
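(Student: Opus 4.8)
The plan is to establish the three assertions \eqref{equ3.8}, \eqref{equ3.9} and \eqref{equ3.10} in turn, in each case passing to the rescaled frame attached to each profile and exploiting the parameter orthogonality \eqref{equ3.4} together with the operator-comparison bounds \eqref{equ1.6.1}--\eqref{equ1.6.2} and the nonlinear estimate of Lemma \ref{full}.

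\emph{Single-profile bound \eqref{equ3.8}.} By definition $\vec u_{(n)}^j(t,x)=T_n^j\vec U_\infty^j\big((t-t_n^j)/h_n^j\big)$, so after the change of variables $t\mapsto\tau_n^j+(t-t_n^j)/h_n^j$ and undoing $T_n^j$, the $ST(I)$ norm of $u_{(n)}^j$ is, uniformly in $n$ (recall $0<h_n^j\le1$, and at the relevant frequencies $B^{1/2}_{q,2}$ is comparable to its homogeneous counterpart), comparable to the $ST_\infty^j$ norm of $\hat U_\infty^j$ on the rescaled interval $(I-t_n^j)/h_n^j$. When $h_\infty^j=1$ we have $h_n^j\equiv1$, $T_n^j$ is a translation, and the rescaled solution solves the Klein--Gordon--Hartree equation with data $\vec V_n^j(\tau_n^j)\to\vec V_\infty^j(\tau_\infty^j)$; continuous dependence and the blow-up criterion (Lemma \ref{criterion}) give $\varlimsup_n\|u_{(n)}^j\|_{ST(\R)}\le\|\hat U_\infty^j\|_{ST(\R)}$. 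When $h_\infty^j=0$, the rescaled solution solves the same equation with $\langle\nabla\rangle_n^j$ in place of $|\nabla|$; writing it as the wave--Hartree equation for $\hat U_\infty^j$ with error terms $\big(\langle\nabla\rangle_n^j-|\nabla|\big)\vec U_n^j$ and $\big(f(\hat U_\infty^j)-f(\Re(\langle\nabla\rangle_n^j)^{-1}\vec U_n^j)\big)$, both small in $ST^*$ by \eqref{equ1.6.1}--\eqref{equ1.6.2}, the perturbation lemma (Lemma \ref{long}, in its evident rescaled and wave version) upgrades local agreement near $\tau_\infty^j$ to the assumed \emph{global} finiteness of $\|\hat U_\infty^j\|_{ST_\infty^j(\R)}$, and \eqref{equ3.8} follows.

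\emph{Almost orthogonality \eqref{equ3.9}.} Given $\eps>0$, approximate each $\hat U_\infty^j$ (equivalently each $u_{(n)}^j$ in its rescaled frame) within $\eps$ in $ST_\infty^j$ by a function smooth and compactly supported in $(t,x)$; by \eqref{equ3.8} this costs only $O(\eps)$ in $\|u_{(n)}^j\|_{ST(I)}$. By \eqref{equ3.4} the space--time supports of the corresponding pieces are pairwise disjoint once $n$ is large, hence $\|\sum_{j<k}g^j_{(n)}\|_{L^q_{t,x}}^q=\sum_{j<k}\|g^j_{(n)}\|_{L^q_{t,x}}^q$ with $q=\tfrac{2(d+1)}{d-1}>2$, and since $x\mapsto x^{2/q}$ is subadditive, $\|\sum_{j<k}g^j_{(n)}\|_{L^q_{t,x}}^2\le\sum_{j<k}\|g^j_{(n)}\|_{L^q_{t,x}}^2$; applying the same reasoning to Littlewood--Paley blocks and Minkowski in $\ell^2$ promotes this to the full $ST=L_t^qB^{1/2}_{q,2}$ norm. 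Letting $\eps\to0$ gives \eqref{equ3.9}.

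\emph{Nonlinear decoupling \eqref{equ3.10}.} View $f(u)=(V*|u|^2)u$ as the diagonal restriction of the trilinear form $\mathcal F(a,b,c)=(V*(ab))c$ and expand $f\big(u_{(n)}^{<k}\big)=\sum_{j,l,m<k}\mathcal F\big(u_{(n)}^j,u_{(n)}^l,u_{(n)}^m\big)$. The diagonal terms $j=l=m$ give $\sum_{j<k}f(u_{(n)}^j)$, and replacing $u_{(n)}^j$ there by $(\langle\nabla\rangle_\infty^j)^{-1}\langle\nabla\rangle u_{(n)}^j$ changes nothing when $h_\infty^j=1$, while for $h_\infty^j=0$ it produces an error tending to $0$ in $ST^*(I)$ by \eqref{equ1.6.1}--\eqref{equ1.6.2}, Lemma \ref{full} and \eqref{equ3.8}. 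For each off-diagonal term (at least two distinct indices), truncate the profiles to compactly supported functions as in the previous step; the resulting quantities are finite sums of expressions of the form $\big\|\big(V*(g^{j}_{(n)}g^{l}_{(n)})\big)g^{m}_{(n)}\big\|_{ST^*(I)}$, and these tend to $0$ exactly as the cross terms were handled in the proof of Lemma \ref{energy}: the scaling separation contributes a factor $(h_n^{j,l})^{2-d}\to0$ while the spatial or temporal separation kills the overlap by dominated convergence, using H\"older and Hardy--Littlewood--Sobolev together with the decay of $V=|x|^{-4}$. Removing the truncation via Lemma \ref{full} finishes the proof.

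\emph{Main obstacle.} The delicate points are (i) controlling $u_{(n)}^j$ when its lifespan shrinks to a point, which forces the perturbation lemma to be run in the rescaled frame with the Klein--Gordon/wave operator mismatch $\langle\nabla\rangle_n^j-\langle\nabla\rangle_\infty^j$ as error, where \eqref{equ1.6.1}--\eqref{equ1.6.2} are essential; (ii) obtaining orthogonality for the non-Hilbertian mixed Besov Strichartz norm, which is why the truncation-to-disjoint-supports device is needed; and (iii) in \eqref{equ3.10}, handling the nonlocal convolution across separated profiles, where the integrability and decay of $|x|^{-4}$ are precisely what make the cross terms vanish.
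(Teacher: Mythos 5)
Your treatment of \eqref{equ3.9} and \eqref{equ3.10} is essentially the paper's argument: for \eqref{equ3.10} the paper likewise first replaces $u_{(n)}^{<k}$ by $u_{\langle n\rangle}^{<k}$ using \eqref{equ1.6.2}, then truncates the profiles so their space–time supports are disjoint for large $n$ and kills the cross terms by \eqref{equ3.4} and dominated convergence; your trilinear expansion $\sum_{j,l,m}\mathcal F(u^j,u^l,u^m)$ is a mild reorganization of the paper's pointwise identity $\big|\sum_j u_{\langle n\rangle,R}^j\big|^2=\sum_j|u_{\langle n\rangle,R}^j|^2$ and leads to the same error terms. Your sketch of \eqref{equ3.9} (truncate, disjoint supports, subadditivity of $x\mapsto x^{2/q}$, promote to Besov via Littlewood–Paley) is the standard argument from \cite{IMN}, which the paper simply cites.

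There is, however, a conceptual slip in your argument for \eqref{equ3.8}. By \eqref{un}, $\vec u_{(n)}^j$ is \emph{defined} as $T_n^j\vec U_\infty^j\big((t-t_n^j)/h_n^j\big)$, and so $u_{(n)}^j=h_n^j T_n^j\tfrac{\langle\nabla\rangle_\infty^j}{\langle\nabla\rangle_n^j}\hat U_\infty^j\big((t-t_n^j)/h_n^j\big)$ is a direct rescaling of the globally defined nonlinear profile $\hat U_\infty^j$. The Fourier multiplier $\langle\nabla\rangle_\infty^j/\langle\nabla\rangle_n^j$ is bounded on $L^p$ uniformly in $n$ by Mikhlin, so \eqref{equ3.8} follows from this formula and the scale/translation invariance of the $ST_\infty^j$ norm (plus, when $h_\infty^j=0$, the fact that the extra $L^q_{t,x}$ contribution in $B^{1/2}_{q,2}$ scales like $(h_n^j)^{1/2}\to0$). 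In particular, \eqref{equ3.8} requires neither continuous dependence, nor the blow-up criterion, nor the perturbation Lemma~\ref{long}. What you are actually doing with the operator mismatch $\langle\nabla\rangle_n^j-\langle\nabla\rangle_\infty^j$ as an $ST^*$ error is proving the proximity of the true solution $u_n^j$ (with data $\vec v_n^j(0)$) to the nonlinear concentrating wave $u_{(n)}^j$ — that is the content of \eqref{lea} and of the later Lemma~\ref{precldes}, not of \eqref{equ3.8}. Your conclusion for \eqref{equ3.8} is correct, but the argument, as written, establishes a different (and harder) statement than the one needed here.
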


\begin{proof}
One can refer to \cite{IMN} for the proof of \eqref{equ3.8} and
\eqref{equ3.9}. Now we turn to prove \eqref{equ3.10}. By the
definition of $u_{(n)}^j$ and $\hat{U}_\infty^j$, we know that
$$u_{(n)}^j(x,t)=\Re\langle\nabla\rangle^{-1}\vec{u}_{(n)}^j(t,x)=\Re\langle\nabla\rangle^{-1}T_n^j\vec{U}_\infty^j\Big(\frac{t-t_n^j}{h_n^j}\Big)
=h_n^jT_n^j\frac{\langle\nabla\rangle_\infty^j}{\langle\nabla\rangle_n^j}
\hat{U}_\infty^j\Big(\frac{t-t_n^j}{h_n^j}\Big).$$ Let $u_{\langle
n\rangle}^{<k}(t,x)=\sum\limits_{j<k}u_{\langle n\rangle}^{j}(x,t),$
where $u_{\langle n\rangle}^{j}(x,t)$ is defined by
$$u_{\langle
n\rangle}^{j}(x,t)=\frac{\langle\nabla\rangle}{\langle\nabla\rangle_\infty^j}u_{(n)}^j
=h_n^jT_n^j \hat{U}_\infty^j\Big(\frac{t-t_n^j}{h_n^j}\Big).$$ Then
we have
\begin{align}\nonumber
&\big\|f\big(u_{(n)}^{<k}\big)-\sum\limits_{j<k}
f\big((\langle\nabla\rangle_\infty^j)^{-1}\langle\nabla\rangle
u_{(n)}^j\big)\big\|_{ST^*(I)}\\\nonumber
=&\big\|f\big(u_{(n)}^{<k}\big)-\sum\limits_{j<k} f(u_{\langle
n\rangle}^{j})\big\|_{ST^*(I)}\\\label{equ3.26}
\leq&\big\|f\big(u_{(n)}^{<k}\big)-f\big(u_{\langle
n\rangle}^{<k}\big)\big\|_{ST^*(I)}\\\label{equ3.27}
&+\big\|f(u_{\langle n\rangle}^{<k})-\sum\limits_{j<k}f(u_{\langle
n\rangle}^{j})\big\|_{ST^*(I)}.
\end{align}

First, we estimate \eqref{equ3.26}. Let
$[G](I)=L_t^{3(d+1)}(I;L_x^\frac{6d(d+1)}{3d^2-3d-8}).$ It follows
from \eqref{fullst2} and the H\"older inequality that
($q=\frac{2(d+1)}{d-1}$)
\begin{align*}
\eqref{equ3.26}\leq&\big\|f\big(u_{(n)}^{<k}\big)-f\big(u_{\langle
n\rangle}^{<k}\big)\big\|_{L^{q'}(I;\dot{B}^\frac12_{q',2})}+\big\|f\big(u_{(n)}^{<k}\big)-f\big(u_{\langle
n\rangle}^{<k}\big)\big\|_{L_{t,x}^{q'}(I\times\R^d)}\\
\lesssim&\big\|u_{(n)}^{<k}-u_{\langle
n\rangle}^{<k}\big\|_{ST_\infty^j(I)}^\frac{2}{d-1}\big(\big\|(u_{(n)}^{<k},u_{\langle
n\rangle}^{<k})\big\|_{L_t^\infty\dot{H}_x^1\cap
ST_\infty^j(I)}\big)^{3-\frac2{d-1}}\\&+|I|^\frac12\big\|u_{(n)}^{<k}-u_{\langle
n\rangle}^{<k}\big\|_{[G](I)}\big(\big\|(u_{(n)}^{<k},u_{\langle
n\rangle}^{<k})\big\|_{[G](I)}\big)^2\\
\lesssim&\Big(\sum_{j<k,h_\infty^j=0}
\big\|\langle\nabla/h_n^j\rangle^{-2}\hat{U}_\infty^{j}\big\|_{ST_\infty^j(I)}\Big)^\frac{2}{d-1}\big(\big\|(u_{(n)}^{<k},u_{\langle
n\rangle}^{<k})\big\|_{L_t^\infty\dot{H}^1\cap
ST_\infty^j(I)}\big)^{3-\frac2{d-1}}\\&+|I|^\frac12\sum_{j<k,h_\infty^j=0}\big\|\langle\nabla/h_n^j\rangle^{-2}\hat{U}_\infty^{j}\big\|_{[G](I)}
\big(\big\|(u_{(n)}^{<k},u_{\langle
n\rangle}^{<k})\big\|_{[G](I)}\big)^2\\
\rightarrow&0,~~\text{as}~~n\to\infty,
\end{align*}
where we utilize \eqref{equ1.6.2} in the last second inequality and
the fact $\hat{U}_\infty^{j}\in L_t^\infty\dot{H}_x^1\cap
ST_\infty^j(I)\subset[G](I)$.

Next we estimate  $\eqref{equ3.27}.$  For $R\gg1$, we define
$\hat{U}_{n,R}^j$ by
\begin{equation}
\hat{U}_{n,R}^j(t,x)=\chi_R(t,x)\hat{U}_\infty^j(t,x)\prod_{l<k}\big\{(1-\chi_{h_n^{j,l}R})(t-t_n^{j,l},x-x_n^{j,l})~|~h_n^{j,l}<R^{-1}\big\},
\end{equation}
where
$(h_n^{j,l},t_n^{j,l},x_n^{j,l})=(h_n^l,t_n^j-t_n^l,x_n^j-x_n^l)/h_n^j,$
and $\chi_R(t,x)=\chi(\frac{t}{R},\frac{x}{R})$ with $\chi(t,x)\in
C^\infty_c(\mathbb{R}^{d+1})$ satisfing $\chi(t,x)=1$ for
$|(t,x)|\leq 1$ and $\chi(t,x)=0$ for $|(t,x)|\geq 2$. Then, noting
that either $h_n^{j,l}\to0$ or $|t_n^{j,l}|+|x_n^{j,l}|\to\infty$ by
\eqref{equ3.4}, we obtain $\hat{U}_{n,R}^j\to\chi_R\hat{U}_\infty^j$
in $ST_\infty^j(\R)$ and $[G](\R)$ as $n\to\infty$. Furthermore, we
get $\chi_R\hat{U}_\infty^j\to\hat{U}_\infty^j$ in the same spaces.

Therefore, we may replace $u_{\langle n\rangle}^j$ by $u_{\langle
n\rangle,R}^j:=h_n^jT_n^j\hat{U}_{n,R}^j\big((t-t_n^j)/h_n^j\big)$.
By the support property of $u_{\langle n\rangle,R}^j$, we have for
large $n$
\begin{equation}
\big(\sum_{j<k}u_{\langle
n\rangle,R}^{j}\big)^2=\sum_{j<k}\big|u_{\langle
n\rangle,R}^{j}\big|^2.
\end{equation}
Thus, we obtain
\begin{align*}
&\big\|f(u_{\langle n\rangle,R}^{<k})-\sum\limits_{j<k}f(u_{\langle
n\rangle,R}^{j})\big\|_{ST^*(I)}\\
\leq&\sum_{j\neq l}\big\|\big(V(\cdot)\ast\big|u_{\langle
n\rangle,R}^{j}\big|^2\big)u_{\langle n\rangle,R}^{l}\big\|_{ST^*(I)}\\
=&\sum_{j\neq
l}\big(h_n^{j,l}\big)^{1-\frac{d}2}\big\|\big(V(\cdot)\ast\big|\hat{U}_{\langle
n\rangle,R}^{j}\big|^2\big)(t,x)\hat{U}_{\langle n\rangle,R}^{l}\big(\frac{t-t_n^{j,l}}{h_n^{j,l}},\frac{x-x_n^{j,l}}{h_n^{j,l}}\big)\big\|_{ST^*(I)}\\
\to&\quad 0,~\text{as}~n\to\infty,
\end{align*}
by Lebesgue domainted convergence theorem, since either
$h_n^{j,l}\to0$ or $|t_n^{j,l}|+|x_n^{j,l}|\to\infty$ by
\eqref{equ3.4}. Thus we concludes the proof of Lemma \ref{lem3.3}.

\end{proof}
After this preliminaries, we now show that
$\vec{u}_{(n)}^{<k}+\vec{\omega}_n^k$ is a good approximation for
$\vec{u}_n$ provided that each nonlinear profile has finite global
Strichartz norm.
\begin{lemma}\label{precldes}
 Let $u_n$ be a sequence of local solutions of
\eqref{equ1} around $t=0$ satisfying
$\varlimsup\limits_{n\rightarrow\infty}E(u_n,\dot{u}_n)<+\infty.$
Assume that in its nonlinear profile decomposition
\eqref{nonlineard},  we have for any $j$
\begin{equation}\label{equ3.11}
\|\hat{U}^j_\infty\|_{ST_\infty^j(\mathbb{R})}+\|\vec{U}^j_\infty\|_{L_t^\infty
L_x^2(\mathbb{R})}<\infty.
\end{equation}
Then, for large $n$, $u_n$ is bounded in the Strichartz and the
energy norms, that is
\begin{equation}\label{equ3.12}
\varlimsup\limits_{n\rightarrow\infty}\big(\|u_n\|_{ST(\mathbb{R})}+\|\vec{u}_n\|_{L_t^\infty
L_x^2(\mathbb{R}\times \R^d)}\big)<+\infty.
\end{equation}

\end{lemma}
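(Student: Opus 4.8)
The plan is to use the nonlinear profile decomposition $\vec u_{(n)}^{<k}+\vec\omega_n^k$ as an approximate solution and invoke the long-time perturbation result, Lemma~\ref{long}, with $w_n:=u_{(n)}^{<k}+\omega_n^k$ and $u=u_n$. First I would record the \emph{a priori} bounds required by \eqref{eq2.20}--\eqref{equ2.201}. Since $\varlimsup_n\tilde E(\vec v_n(0))<\infty$, the asymptotic orthogonality \eqref{orth} gives $\sum_j\varlimsup_n\|\vec v_n^j(0)\|_{L^2_x}^2<\infty$ and the energy orthogonality \eqref{equ3.5} gives $\sum_j\varlimsup_n\tilde E(\vec v_n^j(0))<\infty$; hence only finitely many indices $j$ carry energy above the small-data threshold $\tilde\delta$ of Theorem~\ref{small}, and for the remaining $j$ the small-data theory yields $\|\hat U_\infty^j\|_{ST_\infty^j(\R)}+\|\vec U_\infty^j\|_{L_t^\infty L_x^2}\lesssim\|\vec v_n^j(0)\|_{L^2_x}$. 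Combining this with hypothesis \eqref{equ3.11} on the finitely many large profiles and the Strichartz orthogonality \eqref{equ3.8}--\eqref{equ3.9}, one gets a bound $\varlimsup_n\big(\|u_{(n)}^{<k}\|_{ST(\R)}+\|\vec u_{(n)}^{<k}\|_{L_t^\infty L_x^2}\big)\le A$ that is uniform in $k$; adding the free remainder, for which $\|\vec\omega_n^k\|_{ST(\R)}+\|\vec\omega_n^k\|_{L_t^\infty L_x^2}\lesssim\varlimsup_n\|\vec v_n(0)\|_{L^2_x}$ by Strichartz, produces the constants $M$ and $E$ in \eqref{eq2.20}--\eqref{equ2.201} for $w_n$, while the bound on $\|\vec u_n\|_{L_t^\infty L_x^2}$ over the lifespan of $u_n$ is immediate from conservation of $\tilde E$ and finite initial energy.

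Next I would estimate the error $eq(w_n):=(i\partial_t+\langle\nabla\rangle)\vec w_n+f(w_n)$ in $ST^\ast(I)$ on an arbitrary finite interval $I\ni0$ inside the lifespan of $u_n$. Since $\vec\omega_n^k$ is a free solution and each $\vec u_{(n)}^j$ solves its rescaled equation, $eq(w_n)$ splits into three pieces: (i) the discrepancy $f(w_n)-f(u_{(n)}^{<k})$, which tends to $0$ because $\omega_n^k\to0$ in $L_t^\infty L_x^{2^*}$ (as in the proof of Lemma~\ref{energy}) and, after interpolating the remainder smallness \eqref{equ3.2} against the bounded energy norm, also in the relevant Strichartz-type spaces, so that the multilinear estimate \eqref{fullst2} applies; (ii) the term $f(u_{(n)}^{<k})-\sum_{j<k}f\big((\langle\nabla\rangle_\infty^j)^{-1}\langle\nabla\rangle u_{(n)}^j\big)$, which tends to $0$ by \eqref{equ3.10}; and (iii) the curvature errors $\sum_{j<k,\,h_\infty^j=0}\big(\langle\nabla\rangle-|\nabla|\big)\vec u_{(n)}^j$ coming from the profiles with $h_\infty^j=0$, which after undoing the group action and applying \eqref{equ1.6.1} are $\lesssim\sum_j h_n^j\|\langle\nabla/h_n^j\rangle^{-1}\vec U_\infty^j\|$ in the relevant norm and hence vanish as $n\to\infty$. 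Choosing first $k$ large (to render the $\omega_n^k$-contributions small, uniformly in $n$) and then letting $n\to\infty$ gives $\|eq(w_n)\|_{ST^\ast(I)}\to0$, uniformly in the finite interval $I$; note $eq(u_n)=0$.

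It remains to handle the data mismatch. At $t_0=0$ we have $(\vec u_n-\vec w_n)(0)=\sum_{j<k}\big(\vec v_n^j(0)-\vec u_{(n)}^j(0)\big)$, and each summand tends to $0$ in $L^2_x$ by \eqref{lea}; as $k$ is fixed this makes $\|(\vec u_n-\vec w_n)(0)\|_{L^2_x}\to0$, so the constant $E'$ in \eqref{eq2.21} may be taken as small as we wish, and by the Strichartz estimate \eqref{str1} the free evolution $\vec\gamma_0$ of this difference is likewise small in $ST(\R)$. Feeding these inputs into Lemma~\ref{long} with $\epsilon=\epsilon_1(M,E)$ verifies \eqref{eq2.22} and yields $\|u_n-w_n\|_{ST(I)}\le C(M,E)\epsilon$ and $\|u_n\|_{ST(I)}\le C(M,E,E')$ on every finite $I\ni0$ in the lifespan, for $n$ large; since this bound is independent of $I$, the blow-up criterion of Lemma~\ref{criterion} forces $u_n$ to be global with $\|u_n\|_{ST(\R)}\le C$, and conservation of $\tilde E$ keeps $\|\vec u_n\|_{L_t^\infty L_x^2(\R\times\R^d)}$ bounded, which is \eqref{equ3.12}. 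The main obstacle I expect lies in step (i)--(iii), i.e.\ showing $\|eq(w_n)\|_{ST^\ast}\to0$: one must order the two limits correctly ($k\to K$ before $n\to\infty$), control the nonlinear remainder $\omega_n^k$ in Strichartz-type norms by interpolation rather than directly, and absorb the curvature corrections of the $h_\infty^j=0$ profiles — precisely the interplay of \eqref{equ3.10}, \eqref{equ1.6.1}--\eqref{equ1.6.2}, and the refined smallness \eqref{equ3.2}.
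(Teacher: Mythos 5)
Your proposal follows the paper's argument essentially line by line: approximate $u_n$ by $w_n=u_{(n)}^{<k}+\omega_n^k$, establish the Strichartz and energy bounds for $w_n$ via the orthogonality estimates \eqref{orth}, \eqref{equ3.5}, \eqref{equ3.8}--\eqref{equ3.9} together with small-data scattering for all but finitely many profiles, verify the vanishing of the error term by splitting it into the three pieces (remainder contribution, cross terms via \eqref{equ3.10}, curvature errors for $h^j_\infty=0$), send the data mismatch to zero via \eqref{lea}, and invoke the perturbation Lemma~\ref{long} and the blow-up criterion. The limit ordering ``$k$ close to $K$ first, then $n\to\infty$'' is handled correctly.

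One small bookkeeping slip: in the curvature-error step you quote the bound $\lesssim h_n^j\|\langle\nabla/h_n^j\rangle^{-1}\vec U_\infty^j\|$, but undoing the group action introduces a compensating factor of $(h_n^j)^{-1}$, so the correct bound (as in the paper) is $\lesssim|I|\,\|\langle\nabla/h_n^j\rangle^{-1}\vec U_\infty^j\|_{L_t^\infty L^2_x}$ with no $h_n^j$ prefactor; it still tends to $0$, but by a frequency-splitting argument (cf. the paper's $P_{\le(h_n^j)^{1/2}}/P_{>(h_n^j)^{1/2}}$ split), not trivially because $h_n^j\to0$. This does not affect the validity of your argument.
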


\begin{proof}
We only need to verify the conditions of Lemma \ref{long}. For this
purpose, we always use the fact that $u_{(n)}^{<k}+\omega_n^k$
satisfies that
\begin{align*}
(i\partial_{t}+\langle\nabla\rangle)\big(\vec{u}_{(n)}^{<k}+\vec{\omega}_n^k\big)
=-&f(u_{(n)}^{<k}+\omega_n^{k})+eq\big(u_{(n)}^{<k},\omega_n^{k}\big),
\end{align*}
where the error term $eq\big(u_{(n)}^{<k},\omega_n^{k}\big)$ is
\begin{align*}
eq\big(u_{(n)}^{<k},\omega_n^{k}\big)=
&\sum_{j<k}\big(\langle\nabla\rangle-\langle\nabla\rangle_\infty^j\big)\vec{u}_{(n)}^j+f(u_{(n)}^{<k})-\sum_{j<k}f\big(
u_{\langle
n\rangle}^j\big)\\&+f(u_{(n)}^{<k}+\omega_n^{k})-f\big(u_{(n)}^{<k}\big),
\end{align*}
and $u_{\langle
n\rangle}^j=\big(\langle\nabla\rangle_\infty^j\big)^{-1}\langle\nabla\rangle
u_{(n)}^j$ is as before.

 First, by the definition of the nonlinear
concentrating wave $u_{(n)}^j$ and \eqref{lea}, we have
\begin{align}\nonumber
\big\|\big(\vec{u}_{(n)}^{<k}(0)+\vec{w}_n^k(0)\big)-\vec{u}_n(0)\big\|_{L^2_x}&\leq\sum\limits_{j=0}^{k-1}
\big\|\vec{u}_{(n)}^j(0)-\vec{u}_n^j(0)\big\|_{L^2_x}\rightarrow0,
\end{align}
as $n\rightarrow +\infty.$

Next, by the linear profile decomposition in Lemma \ref{lem3.1}, we
get
\begin{equation}\label{ee5}
\|\vec{u}_n(0)\|_{L^2}^2=\|\vec{v}_n(0)\|_{L^2}^2\geq\sum\limits_{j=0}^{k-1}\|\vec{v}_n^j(0)\|_{L^2}^2+o_n(1)
=\sum\limits_{j=0}^{k-1}\|\vec{u}_{(n)}^j(0)\|_{L^2}^2+o_n(1).
\end{equation}
Thus, using the small data  scattering(Lemma \ref{small}), we obtain
that except for a finite set $J\subset\mathbb{N}$, the energy of
$u_{(n)}^j$ with $j\not\in J$ is smaller than the iteration
threshold. Hence
$$\|u_{(n)}^j\|_{ST(\mathbb{R})}\lesssim\|\vec{u}_{(n)}^j(0)\|_{L^2_x},\quad
j\not\in J.$$ This together with \eqref{equ3.8}, \eqref{equ3.9},
\eqref{equ3.11} and \eqref{ee5} yields that for any finite interval
$I$
\begin{align}\nonumber
\sup\limits_{k}\varlimsup\limits_{n\rightarrow\infty}\|u_{(n)}^{<k}\|_{ST(I)}^2&\lesssim\sum\limits_{j\in
J}\|u_{(n)}^{j}\|_{ST(\mathbb{R})}^2+\sum\limits_{j\not\in
J}\|u_{(n)}^j\|_{ST(\mathbb{R})}^2\\\label{ee6}
&\lesssim\sum\limits_{j\in
J}\|\hat{U}_\infty^j\|_{ST_\infty^j(\mathbb{R})}^2+\varlimsup\limits_{n\rightarrow\infty}\|\vec{u}_n(0)\|_{L^2}^2<+\infty.
\end{align}
Combining this with the Strichartz estimate for $\omega_n^k$, we get
$$\sup\limits_{k}\varlimsup\limits_{n\rightarrow\infty}\|u_{(n)}^{<k}+\omega_n^k\|_{ST(I)}<+\infty.$$
By Lemma \ref{lem3.1} and Lemma \ref{lem3.3}, we have
$$\|f(u_{(n)}^{<k}+\omega_n^k)-f(u_{(n)}^{<k})\|_{ST^*(I)}\rightarrow
0,$$ and
$$\big\|f(u_{(n)}^{<k})-\sum
\limits_{j=0}^{k-1}f(u_{(n)}^j)\big\|_{ST^*(I)}\rightarrow 0,$$ as
$n\rightarrow+\infty.$ On the other hand, the linear part in
$eq\big(u_{(n)}^{<k},\omega_n^{k}\big)$ vanishes when
$h_\infty^j=1$, and is controlled when $h_\infty^j=0$ by
\begin{align*}
\big\|\big(\langle\nabla\rangle-|\nabla|\big)\vec{u}_{(n)}^j\big\|_{L_t^1(I;L^2_x)}\lesssim&|I|\big\|\langle\nabla\rangle^{-1}\vec{u}_{(n)}^j\big\|_{L_t^
\infty(\R;L^2_x)}\\
\simeq&|I|\big\|\langle\nabla/h_n^j\rangle^{-1}\vec{U}_{\infty}^j\big\|_{L_t^
\infty(\R;L^2_x)}\\
\lesssim&|I|\Big( \big\|P_{\leq
(h_n^j)^\frac12}\vec{U}_{\infty}^j\big\|_{L_t^
\infty(\R;L^2_x)}+(h_n^j)^\frac12\big\|P_{>
(h_n^j)^\frac12}\vec{U}_{\infty}^j\big\|_{L_t^
\infty(\R;L^2_x)}\Big) \\
\to&0,\quad\text{as}\quad n\to\infty.
\end{align*}
 Thus,
$\big\|eq\big(u_{(n)}^{<k},\omega_n^{k}\big)\big\|_{ST^\ast(I)}\to0,$
as $n\to\infty.$

Therefore, for $k$ sufficiently close to $K$ and $n$ large enough,
the true solution $u_n$ and the approximate solution
$u_{(n)}^{<k}+\omega_n^k$ satisfy all the assumptions of the
perturbation   Lemma \ref{long}. Hence we can obtain the desired
result.
\end{proof}

\section{Concentration Compactness}
\setcounter{section}{4}\setcounter{equation}{0} By the profile
decomposition in the previous section and the perturbation theory,
we argue in this section that if the scattering result does not
hold, then there must exist a minimal energy solution with some good
compactness properties. This is the object of the following
proposition.
\begin{proposition}\label{prop4.1}
 Suppose that  $E_{max}<+\infty$. Then there exists a
global solution $u_c$ of \eqref{equ1} satisfying
\begin{equation}\label{critical1}
E(u_c)=E_{max},\quad \|u_c\|_{ST(\mathbb{R})}=+\infty.
\end{equation}
Moreover, there exists $c(t):\mathbb{R}^+\rightarrow\mathbb{R}^d$,
such that $K=\{(u_c,\dot{u}_c)(t,x-c(t))\ \big|\ t\in\mathbb{R}^+\}$
is precompact in $H^1(\R^d)\times L^2(\R^d)$. Besides, one can
assume that $c(t)$ is $C^1$ and satisfies
\begin{equation}\label{smooth}
|\dot{c}(t)|\lesssim_{u_c} 1
\end{equation}
uniformly in $t$.

\end{proposition}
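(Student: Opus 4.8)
The plan is to follow the now-standard Kenig--Merle scheme, adapted to the Klein--Gordon--Hartree setting via the profile decomposition of Lemma~\ref{lem3.1} and the perturbation Lemma~\ref{long}. First I would argue by contradiction: if $E_{max}<+\infty$, then by the definition of $E_{max}$ and $\Lambda$ there is a sequence of strong solutions $u_n$ of \eqref{equ1} on intervals $I_n$ with $E(u_n,\dot u_n)\to E_{max}$ and $\|u_n\|_{ST(I_n)}\to+\infty$. After time-translating so that $\|u_n\|_{ST((-\infty,0]\cap I_n)}=\|u_n\|_{ST([0,\infty)\cap I_n)}$, I would apply the linear profile decomposition to $\vec v_n(0)=\vec u_n(0)$. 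The key dichotomy is whether more than one profile is nontrivial. If at least two profiles $\varphi^j$ are nonzero, then by the energy orthogonality of Lemma~\ref{energy} (specifically \eqref{equ3.6}, and the strict inequality unless $K=1$) each nonlinear profile $\hat U^j_\infty$ has energy strictly below $E_{max}$, hence lies in $ST^j_\infty(\R)$ with finite norm by the small-data theory (Theorem~\ref{small}) together with the definition of $\Lambda$; Lemma~\ref{precldes} (via Lemma~\ref{long}) then forces $\varlimsup_n\|u_n\|_{ST(\R)}<\infty$, contradicting $\|u_n\|_{ST(I_n)}\to\infty$. Therefore exactly one profile survives, $\vec\omega^1_n\to0$ in $L^\infty_t L^2_x$, and $\vec u_n(0)=e^{i\langle\nabla\rangle(t-t^1_n)}T^1_n\varphi^1+o(1)$ in the energy space; the corresponding nonlinear profile $\hat U^1_\infty$ must have $\|\hat U^1_\infty\|_{ST^1_\infty(\R)}=+\infty$ (else Lemma~\ref{precldes} again gives a contradiction). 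Undoing the group action produces $u_c$ with $E(u_c)=E_{max}$ and $\|u_c\|_{ST(\R)}=+\infty$; arguing similarly on both time directions and using Lemma~\ref{criterion} shows $u_c$ is global.

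Next I would establish the compactness of the forward orbit modulo spatial translations. Suppose $\{t_n\}\subset\R^+$. Applying the profile decomposition to the rescaled/translated solutions $\vec u_c(t_n+\cdot)$ and using that $\|u_c\|_{ST}$ is infinite on both half-lines emanating from $t_n$ (since $u_c$ does not scatter), the same one-profile rigidity argument forces all but one profile to vanish, the remainder to go to zero, and the scaling parameter $h_n$ to stay $\equiv 1$ (a concentrating profile with $h_n\to0$ would have strictly smaller energy, again contradicting minimality). Hence $(u_c,\dot u_c)(t_n)=(v_\pm(\cdot-x_n),\dot v_\pm(\cdot-x_n))+o(1)$ up to a subsequence for some spatial shifts $x_n$; setting $c(t_n)=x_n$ gives precompactness of $K$ in $H^1\times L^2$. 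The measurability/choice of $c(t)$ is routine; one picks $c(t)$ as (a measurable selection of) a near-minimizer of the translation distance.

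Finally, the regularity and the bound $|\dot c(t)|\lesssim_{u_c}1$: here one first upgrades $c(t)$ to a continuous function by a standard argument (precompactness implies $c$ is continuous up to harmless modification, or one mollifies in time preserving the compactness property), and then to $C^1$. The Lipschitz-in-time bound follows from the local theory: on any unit time interval the flow map is controlled in $H^1\times L^2$ by the conservation of energy (which bounds $\|(u_c,\dot u_c)\|_{H^1\times L^2}$ uniformly) together with the Strichartz and perturbation estimates of Section~2, so $(u_c,\dot u_c)(t+h)$ stays within a fixed distance of $(u_c,\dot u_c)(t)$ for $|h|\le1$; combined with the uniform (compactness-driven) non-degeneracy of the profile this translates into $|c(t+h)-c(t)|\lesssim_{u_c}1+|h|$ on unit scales, hence $|\dot c|\lesssim_{u_c}1$ after smoothing.

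The main obstacle I expect is the \emph{rigidity of the single profile}, i.e.\ verifying that all but one profile must be trivial. The delicate point is that the Hartree nonlinearity is nonlocal, so the energy decoupling requires the care taken in Lemma~\ref{energy} (treating separately the cases $h^j_\infty=0$ and $h^j_\infty=1$, and handling the convolution via Hardy--Littlewood--Sobolev on the spatially separated pieces $\hat\psi^j_{n,R}$); one must be sure that the nonlinear concentrating waves genuinely decouple in $ST^*$, which is exactly the content of Lemma~\ref{lem3.3} and must be invoked with the correct function spaces $ST^j_\infty$ depending on the scaling. Once the energy and Strichartz orthogonality are in hand, feeding them into Lemma~\ref{precldes} is mechanical, but getting all the bookkeeping of scales, time shifts, and the two-directional non-scattering hypothesis aligned is where the real work lies.
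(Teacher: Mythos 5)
Your proposal follows the same Kenig--Merle scheme as the paper, and the one-profile reduction via Lemmas~\ref{energy}, \ref{lem3.3}, \ref{precldes} is sound. However, there is a concrete gap in how you rule out the concentrating scale. You assert that ``a concentrating profile with $h_n\to0$ would have strictly smaller energy, again contradicting minimality,'' and that this forces $h_n\equiv 1$. This is not correct: once $K=1$, the single profile carries the \emph{full} energy $E_{max}$ by the energy decoupling \eqref{equ3.5} regardless of whether $h_n^0\to 0$ or $h_n^0\equiv 1$; the scaling parameter does not shrink the energy. So minimality alone does not exclude the concentrating case, and your argument that $h_n\equiv 1$ has no justification as written. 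This matters both for extracting the critical element and for the precompactness step (where you again invoke the same wrong reasoning to pin $h_n\equiv 1$ along a sequence $t_n$).

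What the paper actually does is substantively different at this point: if $h_n^0\to 0$, the rescaled nonlinear profile $\hat U_\infty^0=\Re|\nabla|^{-1}\vec U_\infty^0$ (see \eqref{un1}) does not solve \eqref{equ1} in the limit; instead it solves the \emph{massless} $\dot H^1$-critical wave--Hartree equation $\partial_{tt}u-\Delta u+(|x|^{-4}\ast|u|^2)u=0$, with finite energy $E_{max}$ and infinite $L_t^q\dot B^{1/2}_{q,2}$-norm, $q=\tfrac{2(d+1)}{d-1}$. Such a solution would contradict the global well-posedness and scattering theorem for the defocusing energy-critical wave--Hartree equation proved in \cite{MZZ}. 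This external scattering result is an essential and nontrivial input that your proposal omits; without it, the step ``hence $h_n^0=1$'' is unjustified. The rest of your outline (globalness of $u_c$ via the blow-up criterion Lemma~\ref{criterion} and the small-data continuation; precompactness of the translated orbit; regularization and Lipschitz bound on $c(t)$, which the paper delegates to \cite{MZh}) is consistent with the paper's route.
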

\begin{proof}  The
proof of \cite{IMN} can be adapted verbatim, but we give a sketch
for the sake of completeness. By the definition of $E_{max}$, we can
choose a sequence $\{u_n(t)\}$ such that
\begin{equation}\label{defocusing} E(u_n,\dot{u}_n)\rightarrow
E_{max},\ \text{and}\ \|u_n\|_{ST(I_n)}\rightarrow\infty,\
\text{as}\ n\rightarrow\infty.\end{equation}  Now we consider the
linear and nonlinear profile decompositions of $u_n$, using Lemma
\ref{lem3.1},
\begin{align}\nonumber
e^{it\langle\nabla\rangle}\vec{u}_n(0)=\sum\limits_{j=0}^{k-1}
\vec{v}^j_n+\vec{\omega}_n^k,\
\vec{v}^j_n=e^{i\langle\nabla\rangle(t-t^j_n)}T_n^j\varphi^j(x),\\
u_{(n)}^{<k}=\sum\limits_{j=0}^{k-1}u_{(n)}^j,\
\vec{u}^j_{(n)}(t,x)=T_n^j\vec{U}_\infty^j\big((t-t_n^j)/h_n^j\big),\\\nonumber
\|\vec{v}_n^j(0)-\vec{u}_{(n)}^j(0)\|_{L^2_x}\rightarrow 0, \ as\
n\rightarrow\infty.
\end{align}
Lemma \ref{precldes} precludes that all the nonlinear profiles
$\vec{U}_\infty^j$ have finite global Strichartz norm. On the other
hand, every solution of \eqref{equ1} with energy less than $E_{max}$
has global finite Strichartz norm by the definition of $E_{max}$.
Hence by \eqref{equ3.5}, we deduce that there is only one profile,
i.e. $K=1,$ and so for large $n$
\begin{equation}
\tilde{E}(\vec{u}_{(n)}^0)= E_{max},\
\|\hat{U}_\infty^0\|_{ST_\infty^0(\R)}=\infty,\
\lim\limits_{n\rightarrow\infty}\|\vec{\omega}_n^1\|_{L_t^\infty
L^2_x}=0.
\end{equation}
If $h_n^0\to0,$ then $\hat{U}_\infty^0=\Re|\nabla|^{-1}\vec
U_\infty^0$ solves the $\dot H^1$-critical wave-Hartree equation
$$\partial_{tt}u-\Delta u+(|x|^{-4}\ast|u|^2)u=0$$
and satisfies
$$E(\hat U_\infty^0(\tau_\infty^0))=E_{max}<+\infty,~\big\|\hat
U_\infty^0\big\|_{L_t^q(\R;\dot{B}^\frac12_{q,2})}=\infty,~q=\frac{2(d+1)}{d-1}.$$
But Miao-Zhang-Zheng \cite{MZZ} has proven that there is no such
solution. Hence $h_n^0=1.$ And so there exist a sequence
$(t_n,x_n)\in \mathbb{R}\times\mathbb{R}^d$ and $\phi\in
L^2(\mathbb{R}^d)$ such that along some subsequence,
\begin{equation}\label{sequa}
\|\vec{u}_n(0,x)-e^{-it_n\langle\nabla\rangle}\phi(x-x_n)\|_{L^2_x}\rightarrow
0, \ n\rightarrow\infty.\end{equation}

Now we show that $\hat
U_\infty^0=\langle\nabla\rangle^{-1}\vec{U}_\infty^j$ is a global
solution. Assume not, then we can choose a sequence $t_n\in \R$
which approaches the maximal existence time. Since $\hat
U_\infty^0(t+t_n)$ satisfies \eqref{defocusing}, then applying the
above argument to it, we obtain by \eqref{sequa} that for some
$\psi\in L^2$ and another sequence $(t_n',x_n')\in
\mathbb{R}\times\mathbb{R}^d$ such that
\begin{equation}\label{sequa1}
\|\vec{U}_\infty^0(t_n)-e^{-it_n'\langle\nabla\rangle}\psi(x-x_n')\|_{L^2_x}\rightarrow
0, \end{equation} as $n\rightarrow\infty.$ Let
$\vec{v}:=e^{it\langle\nabla\rangle}\psi.$ For any $\varepsilon>0,$
there exist $\delta>0$ with $I=[-\delta,\delta]$ such that
$$\big\|\langle\nabla\rangle^{-1}\vec{v}(t-t_n')\big\|_{ST(I)}\leq\varepsilon,$$
which together with \eqref{sequa1} shows that for sufficiently large
$n$
$$\big\|\langle\nabla\rangle^{-1}e^{it\langle\nabla\rangle}\vec
U_\infty^0(t_n)\big\|_{ST(I)}\leq\varepsilon.$$ If $\varepsilon$ is
small enough, this implies that the solution $\vec U_\infty^0$
exists on $[t_n-\delta,t_n+\delta]$ for large $n$ by the small data
theory (Lemma \ref{small}). This contradicts the choice of $t_n$.
Thus $\hat U_\infty^0$ is a global solution and it is just the
desired critical element $u_c$. Moreover, since \eqref{equ1} is
symmetric in $t$, we may assume that
\begin{equation}\label{forward}
\|u_c\|_{ST(0,+\infty)}=+\infty.
\end{equation} We call such $u$ a forward critical element.

One can refer to \cite{MZh} for the  choice of  $c(t)$. Thus we
concludes the proof of Proposition \ref{prop4.1}.
\end{proof}

As a consequence of the above proposition and the
Hardy-Littlewood-Sobolev inequality, we have
\begin{corollary}\label{quick}
Let $u$ be a forward critical element. And we
denote$$E_{R,c}=\int_{|x-c|\geq R}\big(|u|^2+|\nabla
u|^2+|\dot{u}|^2\big)dx+\iint\limits_{\substack{|x-c|\geq R \\ y\in
\mathbb{R}^d}}\frac{|u(t,x)|^2|u(t,y)|^2}{|x-y|^4}dx dy,$$ then for
any $\eta>0$, there exists $R(\eta)>0$ such that
$$E_{R(\eta),c(t)}\leq\eta E(u,\dot{u}),\ for\ any\ t>0.$$
\end{corollary}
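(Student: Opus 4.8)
The plan is to deduce Corollary~\ref{quick} directly from the precompactness of the orbit $K=\{(u_c,\dot u_c)(t,x-c(t))\mid t\geq 0\}$ in $H^1(\R^d)\times L^2(\R^d)$ established in Proposition~\ref{prop4.1}, together with the Hardy--Littlewood--Sobolev inequality to handle the nonlocal potential term. First I would observe that after the change of variables $x\mapsto x+c(t)$ the quantity $E_{R,c(t)}$ becomes a functional of the translated data $(w(t),\dot w(t)):=(u_c(t,\cdot+c(t)),\dot u_c(t,\cdot+c(t)))$, evaluated on the fixed region $\{|x|\geq R\}$: namely
\begin{equation*}
E_{R,c(t)}=\int_{|x|\geq R}\big(|w|^2+|\nabla w|^2+|\dot w|^2\big)\,dx+\iint_{\substack{|x|\geq R\\ y\in\R^d}}\frac{|w(t,x)|^2|w(t,y)|^2}{|x-y|^4}\,dx\,dy.
\end{equation*}

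Next I would argue by contradiction. If the conclusion failed for some $\eta_0>0$, there would be a sequence of times $t_n>0$ with $E_{n,c(t_n)}>\eta_0 E(u_c,\dot u_c)$, i.e.\ the ``tail energy outside radius $n$'' of $(w(t_n),\dot w(t_n))$ stays bounded below. By precompactness of $K$, after passing to a subsequence $(w(t_n),\dot w(t_n))\to(w_\infty,\dot w_\infty)$ strongly in $H^1\times L^2$. For the quadratic part of the tail energy, strong $H^1\times L^2$ convergence plus the fact that $\int_{|x|\geq n}(|w_\infty|^2+|\nabla w_\infty|^2+|\dot w_\infty|^2)\,dx\to 0$ by dominated convergence gives that this contribution tends to $0$. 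For the Hartree tail term I would split
\begin{equation*}
\iint_{\substack{|x|\geq n\\ y\in\R^d}}\frac{|w(t_n,x)|^2|w(t_n,y)|^2}{|x-y|^4}\,dx\,dy=\Big\|\big(V*|w(t_n)|^2\big)\,\mathbf{1}_{|x|\geq n}|w(t_n)|^2\Big\|_{L^1_x}
\end{equation*}
and control it via Hardy--Littlewood--Sobolev by $C\big\|\mathbf{1}_{|x|\geq n}|w(t_n)|^2\big\|_{L^{d/(d-2)}}\big\||w(t_n)|^2\big\|_{L^{d/(d-2)}}\lesssim\|w(t_n)\|_{L^{2^*}}^2\,\|\mathbf{1}_{|x|\geq n}w(t_n)\|_{L^{2^*}}^2$; since $\|w(t_n)\|_{L^{2^*}}$ is uniformly bounded (Sobolev embedding, uniform $H^1$ bound) and $\|\mathbf{1}_{|x|\geq n}w(t_n)\|_{L^{2^*}}\to\|\mathbf{1}_{|x|\geq n}w_\infty\|_{L^{2^*}}+o(1)\to 0$ by strong convergence in $L^{2^*}$ and dominated convergence, this term also vanishes. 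Hence $E_{n,c(t_n)}\to 0$, contradicting the lower bound, and the corollary follows.

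The only mildly delicate point is the interchange of ``$n\to\infty$'' with the convergence $w(t_n)\to w_\infty$: one must be careful that $\|\mathbf{1}_{|x|\geq n}w(t_n)\|_{L^{2^*}}\leq\|\mathbf{1}_{|x|\geq n}w_\infty\|_{L^{2^*}}+\|w(t_n)-w_\infty\|_{L^{2^*}}$ and that the first term can be made small by first choosing $R_0$ with $\|\mathbf{1}_{|x|\geq R_0}w_\infty\|_{L^{2^*}}<\eta$ and then noting that for $n\geq R_0$ the indicator set only shrinks; the second term is $o_n(1)$ by strong convergence. This is the main obstacle, but it is routine. Everything else is a direct application of Hardy--Littlewood--Sobolev and the Sobolev embedding $\dot H^1\hookrightarrow L^{2^*}$, precisely as in the estimate \eqref{non} already used in the proof of Lemma~\ref{energy}.
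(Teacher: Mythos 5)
Your proof is correct and is essentially the argument the paper intends: the paper gives no proof for this corollary, simply noting it follows from Proposition~\ref{prop4.1} and the Hardy--Littlewood--Sobolev inequality, and your contradiction argument via precompactness of $K$ in $H^1\times L^2$, the Sobolev embedding $\dot H^1\hookrightarrow L^{2^*}$, and the HLS estimate $\|(V*|w|^2)\mathbf{1}_{|x|\geq R}|w|^2\|_{L^1}\lesssim\|\mathbf{1}_{|x|\geq R}w\|_{L^{2^*}}^2\|w\|_{L^{2^*}}^2$ supplies exactly those missing details. The handling of the double limit (first fix $R_0$ for $w_\infty$, then let $n\to\infty$) is the right care point, and you treat it correctly.
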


The next corollary is the conclusion of this section.
\begin{corollary}\label{cor4.2}
Let $u$ be a nonlinear strong solution of \eqref{equ1} such that the
set $K$ defined in Proposition \ref{prop4.1} is precompact in
$H^1\times L^2$, and $E(u,\dot{u})\neq 0.$ Then there exists a
constant $\beta=\beta(\tau)>0$ such that, for all time $t>0,$ there
holds that
\begin{align}\label{big}
\int_t^{t+\tau}\iint_{\mathbb{R}^d\times\mathbb{R}^d}\frac{|x_2-y_2|^2}{|x-y|^{6}}|u(s,x)|^2|u(s,y)|^2dx
dyds \geq\beta,
\end{align}
where  $x_2$ denotes the second component of $x\in\mathbb{R}^d$. In
particular, there holds that
\begin{equation}
\int_0^{t}\iint_{\mathbb{R}^d\times\mathbb{R}^d}\frac{|x_2-y_2|^2}{|x-y|^{6}}|u(t,x)|^2|u(t,y)|^2dx
dyds\gtrsim t.
\end{equation}
\end{corollary}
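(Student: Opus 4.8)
The plan is to argue by contradiction, after isolating the relevant functional. Write
\[
G(v)=\iint_{\R^d\times\R^d}\frac{|x_2-y_2|^2}{|x-y|^6}\,|v(x)|^2|v(y)|^2\,dx\,dy ,
\]
so that the left-hand side of \eqref{big} is $\int_t^{t+\tau}G(u(s))\,ds$. I would first record three facts about $G$. (i) The weight $|x_2-y_2|^2|x-y|^{-6}$ depends only on $x-y$, so $G$ is invariant under spatial translations. (ii) Since $0\le|x_2-y_2|^2|x-y|^{-6}\le|x-y|^{-4}$, the Hardy--Littlewood--Sobolev and Sobolev inequalities give $0\le G(v)\lesssim\|v\|_{\dot H^1}^4$, and the same estimate applied to the difference $|v(x)|^2|v(y)|^2-|w(x)|^2|w(y)|^2$ shows $G$ is continuous on $H^1(\R^d)$. (iii) If $G(v)=0$, the nonnegative integrand vanishes a.e., so $v(x)v(y)=0$ for a.e.\ $(x,y)$ with $x_2\ne y_2$; hence $\{v\ne0\}$ lies, up to a null set, in a hyperplane $\{x_2=\mathrm{const}\}$, and so $v\equiv0$. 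Fact (iii) is precisely why the statement must be an average over a whole time interval rather than a pointwise-in-time bound: vanishing of $G$ at a single instant only annihilates the first component of the data.

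Now suppose \eqref{big} fails for every $\beta>0$; then there are $t_n>0$ with $\int_{t_n}^{t_n+\tau}G(u(s))\,ds\to0$. Put $\psi_n=\big(u(t_n,\cdot-c(t_n)),\,\dot u(t_n,\cdot-c(t_n))\big)\in K$; by precompactness of $K$ we may assume $\psi_n\to(\phi,\phi_1)$ in $H^1\times L^2$. Let $w$ be the strong solution of \eqref{equ1} with data $(\phi,\phi_1)$: by Theorem \ref{small} and Remark \ref{rem2.3} it has finite $ST$-norm on some $[0,\sigma]$ with $0<\sigma\le\tau$, and by the perturbation Lemma \ref{long} (with $w$ the reference solution and zero error term) together with the Strichartz estimates, the solutions with data $\psi_n$ exist on $[0,\sigma]$ for large $n$ and converge to $w$ in $C([0,\sigma];H^1\times L^2)$. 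Since $K$ is bounded in $H^1\times L^2$ one has $\sup_s\|u(s)\|_{\dot H^1}<\infty$, so the integrands $G(u(t_n+s))$ are uniformly bounded on $[0,\sigma]$; using the translation invariance and continuity of $G$ and dominated convergence,
\[
\int_0^\sigma G(w(s))\,ds=\lim_{n\to\infty}\int_0^\sigma G\big(u(t_n+s)\big)\,ds=\lim_{n\to\infty}\int_{t_n}^{t_n+\sigma}G(u(s))\,ds=0 .
\]
As $s\mapsto G(w(s))$ is continuous and nonnegative it vanishes identically on $[0,\sigma]$, so by (iii) $w(s,\cdot)\equiv0$ for all $s\in[0,\sigma]$; differentiating in $s$ gives $\dot w(s,\cdot)\equiv0$, whence $E(w,\dot w)=E(0,0)=0$. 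On the other hand $E$ is continuous on $H^1\times L^2$, translation invariant and conserved along $u$, so $E(w,\dot w)=E(\phi,\phi_1)=\lim_n E(u(t_n),\dot u(t_n))=E(u,\dot u)\ne0$, a contradiction. This proves \eqref{big}.

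Finally, for the last inequality fix $t>0$, set $N=\lfloor t/\tau\rfloor$, and apply \eqref{big} on each of the disjoint intervals $[k\tau,(k+1)\tau]$, $0\le k\le N-1$, contained in $[0,t]$; by nonnegativity of the integrand, $\int_0^t\iint\frac{|x_2-y_2|^2}{|x-y|^6}|u(s,x)|^2|u(s,y)|^2\,dx\,dy\,ds\ge N\beta\gtrsim t$. The one place where care is needed is the compactness step in the second paragraph: converting the smallness of the time-average along $t_n$ into a genuine limiting solution $w$ that vanishes on an entire interval --- this uses the precompactness of $K$, the continuous dependence supplied by Lemma \ref{long}, and the translation invariance of $G$ --- whereas all the rest is routine.
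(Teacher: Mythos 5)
Your proof is correct and is the standard compactness-and-rigidity argument: the paper itself defers this corollary to \cite{MZh}, but the route you take --- argue by contradiction along a sequence $t_n$ of vanishing time-averages, use precompactness of $K$ together with translation invariance of $G$, continuity of $G$ on $H^1$, and the perturbation Lemma \ref{long} to pass to a limiting solution $w$ on a short interval, then conclude from the definiteness of the weight $|x_2-y_2|^2|x-y|^{-6}$ that $w\equiv 0$, contradicting energy conservation and $E(u,\dot u)\neq 0$ --- is precisely the intended one. Your observation (iii) (that $G(v)=0$ forces $v=0$, since $\{x_2=y_2\}$ is null in $\R^{2d}$) is the key rigidity fact, and the deduction of the second inequality from the first by tiling $[0,t]$ with $\lfloor t/\tau\rfloor$ intervals is also correct.
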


\begin{proof}
One can refer to \cite{MZh} for the detail proof.
\end{proof}

\section{Extinction of the critical element}
\setcounter{section}{5}\setcounter{equation}{0}   In this section,
we utilize the technique in \cite{Pa2} to prove that the critical
solution constructed in Section 4 does not exist, thus ensuring that
$E_{max} = +\infty$. This implies Theorem \ref{theorem}.

\begin{proposition}\label{infty} Assume that $d\geq 5,$  then $E_{max}=+\infty.$
\end{proposition}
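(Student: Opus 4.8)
The plan is to argue by contradiction. Suppose $E_{max}<+\infty$; then Proposition \ref{prop4.1} furnishes a forward critical element $u=u_c$ with $E(u,\dot u)=E_{max}>0$, $\|u\|_{ST(0,\infty)}=+\infty$, and whose trajectory $\{(u,\dot u)(t,\cdot-c(t)):t\ge0\}$ is precompact in $H^1\times L^2$, with $c\in C^1$ and $|\dot c(t)|\lesssim 1$. The strategy, following \cite{Pa2}, is to exploit a virial/Morawetz-type identity associated with the vector field pointing in the direction \emph{orthogonal} to the conserved momentum $P(u)$, so as to sidestep the lack of Lorentz invariance. After a fixed rotation of $\R^d$ we may assume $P(u)=(P_1,0,\dots,0)$, i.e.\ the momentum has vanishing second component, $\int \dot u\,\partial_{x_2}u\,dx=0$. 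The plan is then to consider a truncated virial quantity of the form
\[
M_R(t)=\int_{\R^d}\dot u(t,x)\,\big(\phi_R(x-c(t))\,\partial_{x_2}u(t,x)\big)\,dx,
\]
or a symmetrized variant thereof, where $\phi_R(x)=x_2\,\chi(|x|/R)$ is a spatial cutoff at scale $R$, and to compute $\frac{d}{dt}M_R(t)$ using the equation \eqref{equ1}.

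First I would record the differential identity: differentiating $M_R$ in $t$, integrating by parts, and using $\ddot u=\Delta u-u-f(u)$, the main (untruncated) term produces exactly the interaction Morawetz expression
\[
\iint_{\R^d\times\R^d}\frac{|x_2-y_2|^2}{|x-y|^6}\,|u(s,x)|^2|u(s,y)|^2\,dx\,dy
\]
appearing in Corollary \ref{cor4.2} — this is the crucial sign-definite term coming from the nonlocal nonlinearity $f(u)=(|x|^{-4}*|u|^2)u$, whose Hartree kernel differentiates to a manifestly positive quadratic form in $x_2-y_2$. The contributions of the kinetic and mass terms $|\nabla u|^2+|u|^2$ are handled because the derivative falls in the $x_2$ direction and, after integration by parts in $x_2$, these either vanish by symmetry or are controlled; the term involving $\dot c(t)$ is where one uses $|\dot c|\lesssim 1$ together with the precompactness of $K$. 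The point of choosing the $x_2$-direction (orthogonal to $P$) is precisely that the term linear in the momentum density, namely $\dot c_2\int \dot u\,\partial_{x_2}u$ type contributions and the boundary momentum flux, are killed or made small by the normalization $P_2(u)=0$; this is the role played by \cite{Pa2}'s device.

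Next I would run the standard bootstrap: on one hand $|M_R(t)|\lesssim_{u} R$ for all $t$, uniformly, because $\|(u,\dot u)(t)\|_{H^1\times L^2}\le\sqrt{2E_{max}}$ and the weight $\phi_R$ on the (essentially compactly supported modulo $\eta$, by Corollary \ref{quick}) mass is $O(R)$. On the other hand, integrating the differential identity over $[0,T]$ and using Corollary \ref{cor4.2} to bound the good term below by $\gtrsim T$ (after summing the lower bound \eqref{big} over $\lfloor T/\tau\rfloor$ unit blocks), while the error terms created by the truncation at scale $R$ are, by Corollary \ref{quick}, bounded by $\eta\,E(u,\dot u)\,T$ plus an $R$-dependent but $T$-independent constant, one obtains
\[
\beta\,\frac{T}{\tau}\ \lesssim\ \big|M_R(T)-M_R(0)\big|+\eta\,E_{max}\,T\ \lesssim\ R+\eta\,E_{max}\,T.
\]
Choosing $\eta$ small (depending only on $\beta,\tau,E_{max}$) absorbs the last term into the left side, giving $T\lesssim_{u} R$ for all $T>0$ with $R=R(\eta)$ fixed — a contradiction as $T\to\infty$. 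Hence $E_{max}=+\infty$, which by the reduction in Section 4 yields Theorem \ref{theorem}.

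The main obstacle I anticipate is the careful treatment of the truncation error terms in $\frac{d}{dt}M_R$, i.e.\ the commutators between the cutoff $\chi(|x-c(t)|/R)$ and the operators $\partial_{x_2}$, $\Delta$, and the nonlocal operator $(|x|^{-4}*|\cdot|^2)$ — in particular showing that the error from the Hartree term, where the cutoff interacts with the long-range kernel $|x-y|^{-4}$ and its derivative, is genuinely controlled by the exterior-energy quantity $E_{R,c(t)}$ of Corollary \ref{quick} and hence by $\eta E(u,\dot u)$. A secondary technical point is justifying the differentiation under the integral sign and the integrations by parts at the level of the $H^1\times L^2$ strong solution (handled by the usual density/approximation argument), and verifying that the $\dot c(t)$-terms are absorbable using only $|\dot c|\lesssim 1$ and precompactness rather than any further regularity of $c$.
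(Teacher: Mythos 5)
Your overall scheme is the right one (it is the paper's), and $M_R(t)$ is exactly the paper's virial action $I(t)=\int z_2\phi_R(z)\partial_2u\,u_t\,dx$ with $z=x-c(t)$. But there is a genuine gap in the step where you claim that "the contributions of the kinetic and mass terms $|\nabla u|^2+|u|^2$ are handled because the derivative falls in the $x_2$ direction and, after integration by parts in $x_2$, these either vanish by symmetry or are controlled." That claim is false. If you actually carry out the computation of $\partial_t I(t)$, the term $\int z_2\phi_R\,\partial_2 u_t\,u_t\,dx=\tfrac12\int z_2\phi_R\,\partial_2(u_t^2)\,dx$ gives (after integrating by parts) $-\tfrac12\int\phi_R\,u_t^2\,dx$ plus cutoff errors, the Laplacian term gives $-\int\phi_R|\partial_2u|^2+\tfrac12\int\phi_R|\nabla u|^2$ plus cutoff errors, and similarly for $-u$ and $-f(u)$. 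The net result is
\begin{equation*}
\partial_t I(t)=\tfrac12\int\phi_R(z)\big(-|u_t|^2+|u|^2+|\nabla u|^2+(V*|u|^2)|u|^2\big)\,dx-\int\phi_R(z)|\partial_2u|^2\,dx+\text{(Hartree term)}+\text{(errors)}.
\end{equation*}
The first integral is a Lagrangian-density quantity; it is of order $E(u,\dot u)$, it has no definite sign, and nothing in your argument makes it small. In particular you cannot deduce the coercivity $-\partial_t M_R(t)\gtrsim \iint |x_2-y_2|^2|x-y|^{-6}|u(x)|^2|u(y)|^2\,dx\,dy-\eta E$ from $I$ alone, and the bootstrap collapses.

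The missing idea is to add the \emph{equirepartition-of-energy action}
\begin{equation*}
J(t)=\int_{\mathbb{R}^d}\phi_R(z)\,u(t,x)\,u_t(t,x)\,dx,\qquad
\partial_t J(t)=\int\phi_R(z)\big(|u_t|^2-|u|^2-|\nabla u|^2-(V*|u|^2)|u|^2\big)\,dx+\text{(errors)},
\end{equation*}
and to work with $A(t)=I(t)+\tfrac12 J(t)$. Then $\tfrac12\partial_t J$ exactly cancels the Lagrangian-density term, leaving $-\partial_t A=\int\phi_R|\partial_2u|^2\,dx+\text{(Hartree term, which after $x\leftrightarrow y$ symmetrization becomes }\iint|x_2-y_2|^2|x-y|^{-6}|u|^2|u|^2)+\text{(errors)}$, with the errors controlled by $\eta E$ via Corollary~\ref{quick}. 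Since $|A(t)|\lesssim R\,E$ still holds, your bootstrap (lower bound from Corollary~\ref{cor4.2} versus linear-in-$T$ upper bound) goes through. The rest of your outline — normalizing $P_2(u)=0$, the role of $|\dot c|\lesssim 1$, the $|A|\lesssim R$ bound, and the choice $\eta$ small then $R$ large — matches the paper's argument; only the explicit cancellation via $J(t)$ needs to be supplied, and your assertion that it happens "by symmetry" must be retracted.
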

\begin{proof}We use a Virial-type estimate in a direction orthogonal to the
momentum vector. Up to relabeling the coordinates, we might assume
that $\hbox{Mom}(u)$ is parallel to the first coordinate. Thus we
have
\begin{equation}\label{equ4.1}\int_{\mathbb{R}^d}u_t(t,x)\partial_j u(t,x)dx=0,\quad \forall
j\geq 2.\end{equation}
 Let $\phi_R(x)=\phi(x/R)$ where
$\phi(x)$ is a nonnegative smooth radial function such that supp
$\phi\subseteq B(0,2)$ and $\phi\equiv1$ in $B(0,1)$. We define the
Virial action
$$I(t)=\int_{\mathbb{R}^d}z_2\phi_R(z)\partial_2u(t,x)u_t(t,x)dx,$$
where $z=x-c(t)$ and $z_2$ denotes the second component of
$z\in\mathbb{R}^d$. Integrating by parts we get by \eqref{equ1}
\begin{align*}\nonumber
\partial_tI(t)=&\int_{\mathbb{R}^d}\partial_t(z_2\phi_R(z))\partial_2u(t,x)u_t(t,x)dx+\frac12\int_{\mathbb{R}^d}z_2\phi_R(z)
\partial_2(u_t(x,t))^2dx\\\nonumber
&+\int_{\mathbb{R}^d}z_2\phi_R(z)\partial_2u(t,x)\big(\Delta
u-u-(V(\cdot)*|u|^2)u\big)dx\\
=&\frac12\int_{\mathbb{R}^d}\big(-|u_t|^2+|u|^2+|\nabla
u|^2+(V(\cdot)*|u|^2)|u|^2\big)dx-\int_{\mathbb{R}^d}|\partial_2u|^2dx\\
&+\dot{z}_2\int_{\mathbb{R}^d}u_t\partial_2udx-2\int_{\mathbb{R}^d}z_2\phi_R(z)|u|^2(\frac{x_2}{|x|^{6}}*|u|^2))dx\\
 &+\int_{|z|\geq R}\mathcal{O}_1(u)dx,
\end{align*}
where
\begin{align*}
\mathcal{O}_1(u)=&\frac12\big[\frac{z_2}{R}\phi_{R}'-(1-\phi_R(x))\big]\big[-|u_t|^2+|u|^2+|\nabla
u|^2+(V(\cdot)*|u|^2)|u|^2\big]\\
&-(c'(t)\cdot\nabla\phi_R)\frac{z_2}{R}\partial_2 u
u_t-c'_2(t)(1-\phi_R(z))\partial_2u u_t-(\nabla\phi_R\cdot\nabla
u)z_2\partial_2u,
\end{align*}
is supported on the set $|z|\geq R$ and satisfies
 $$\big|\int_{|z|\geq R}\mathcal{O}_1(u)dx\big|\lesssim \int_{|z|\geq R}\big(|u|^2+|\nabla
u|^2+|\dot{u}|^2\big)dx.$$ Besides, we define the equirepartition of
energy action
$$J(t)=\int_{\mathbb{R}^d}\phi_R(z)u(t,x)u_t(t,x)dx.$$
Then
\begin{equation}
\partial_t J(t)=\int_{\mathbb{R}^d}\Big(|u_t|^2-|u|^2-|\nabla
u|^2-(V(\cdot)*|u|^2)|u|^2\Big)dx+\int_{|z|\geq
R}\mathcal{O}_2(u)dx,
\end{equation}
where
$$\mathcal{O}_2(u)=(1-\phi_R(z))\big[|u_t|^2-|u|^2-|\nabla
u|^2-(V(\cdot)*|u|^2)|u|^2\big]+\big(c^\prime(t)\cdot\nabla\phi_R\big)\frac{uu_t}{R}-\frac{u}{R}\nabla\phi_R\cdot\nabla
u,$$ has the same properties as $\mathcal{O}_1(u)$.

Considering $A(t)=I(t)+\frac12 J(t)$, we get
\begin{equation}\label{equ5.7}
|A(t)|\lesssim R E(u,\dot{u}),\ \text{for all time}\ t,
\end{equation}
 and
\begin{align*}
\partial_tA(t)=&-\int_{\mathbb{R}^d}|\partial_2u|^2dx-2\iint\limits_{\mathbb{R}^d\times\mathbb{R}^d}\phi_R(x-c(t))
(x_2-c_2(t))\frac{x_2-y_2}{|x-y|^{6}}|u(t,x)|^2|u(t,y)|^2dxdy
\\&-\int_{|z|\geq R}(\mathcal{O}_1(u)+\frac12\mathcal{O}_2(u))dx.
\end{align*}
And so by symmetrization, $\partial_tA(t)$ can be rewritten as
\begin{align}\nonumber
-\partial_tA(t)=&\int_{\mathbb{R}^d}|\partial_2u|^2dx+\iint_{\mathbb{R}^d\times\mathbb{R}^d}
\frac{|x_2-y_2|^2}{|x-y|^{6}}|u(t,x)|^2|u(t,y)|^2dxdy\\\label{const2}
&+I_2+\int_{|z|\geq R}(\mathcal{O}_1(u)+\mathcal{O}_2(u))dx,
\end{align}
where
\begin{align*}
I_2=\int\limits_{\mathbb{R}^d\times\mathbb{R}^d}&\big[(x_2-c_2(t))\phi_R(x-c(t))-(y_2-c_2(t))\phi_R(y-c(t))-(x_2-y_2)\big]\\
&\times\frac{x_2-y_2}{|x-y|^{6}}|u(t,x)|^2|u(t,y)|^2dxdy.
\end{align*}

We will show that $I_2$ constitute only a small fraction of
$E(u,u_t)$. First, by Corollary \ref{quick}, we know that if $R$ is
sufficient large depending on $u$ and $\eta$, then
$$E_{R,c(t)}(u,u_t)\leq \eta E(u,u_t).$$

Let $\chi$ denote a smooth cutoff to the region
$|x-c(t)|\geq\frac{R}{2}$ such that $\nabla\chi$ is bounded by
$R^{-1}$ and supported where $|x-c(t)|\sim R$. In the region where
$|x-c(t)|\thicksim|y-c(t)|$, we have
$$|x-c(t)|\thicksim|y-c(t)|\gtrsim R,$$ since otherwise $I_2$
vanish. Moreover, note that
$$|(x_2-c_2(t))\phi(x-c(t))-(y_2-c_2(t))\phi(y-c(t))|\lesssim|x-y|,$$
we use the Hardy-Littlewood-Sobolev inequality and Sobolev embedding
thoerem to control the contribution to $I_2$ from this regime by
\begin{align*}
\iint_{\mathbb{R}^d\times \mathbb{R}^d}\frac{|\chi u(t,x)|^2|\chi
u(t,y)|^2}{|x-y|^4}dxdy \lesssim\|\nabla(\chi u)\|_2^4\lesssim
\eta^2.
\end{align*}
In the region where $|x-c(t)|\ll|y-c(t)|,$ we use the fact that
$$|x-c(t)|\ll|y-c(t)|\sim|x-y|\ \text{and}\ |y-c(t)|\gtrsim R$$
to estimate the contribution from this regime by
$$\iint_{\mathbb{R}^d\times
\mathbb{R}^d}\frac{1}{|x-y|^4}|\chi u(t,y)|^2|u(t,x)|^2
dxdy\lesssim\|\nabla(\chi u)\|_{L_x^2}^2\|\nabla
u\|_{L_x^2}^2\lesssim \eta.$$ The last line follows from the same
computation as the first case. Finally, since the remaining region
$|y-c(t)|\ll|x-c(t)|$ can be estimated in the same way, we conclude
that
$$I_2\lesssim \eta.$$

Chosen $\eta$ sufficiently small depending on $u$ and $R$
sufficiently large depending on $u$ and $\eta$, we obtain
\begin{equation}\label{contrad}
-\partial_tA(t)\geq\iint_{\mathbb{R}^d\times\mathbb{R}^d}\frac{|x_2-y_2|^2}{|x-y|^{6}}|u(t,x)|^2|u(t,y)|^2dxdy-\eta
E(u,u_t).
\end{equation}
If $E_{max}<\infty$, integrating \eqref{contrad} from 0 to $T>0$ and
using Corollary \ref{cor4.2}, we get that there exists
$\alpha=\alpha(1,u)>0$ such that
$$\int_0^T\iint_{\mathbb{R}^d\times\mathbb{R}^d}\frac{|x_2-y_2|^2}{|x-y|^{6}}|u(s,x)|^2|u(s,y)|^2dx
dy ds\geq \alpha T,$$ for all $T>1.$ Thus $-A(t)\gtrsim T$ for large
$T$, which contradicts with \eqref{equ5.7}. Hence we have
$E_{max}=+\infty$, this concludes the proof of Proposition
\ref{infty}.
\end{proof}

\textbf{Acknowledgements:} The authors would like to express their
gratitude to the anonymous referee for his/her useful suggestions
and comments. Jiqiang Zheng was partly supported by ERC grant
SCAPDE.

\begin{center}

\end{center}


\begin{thebibliography}{99}


\bibitem{BCD} H. Bahouri, J. Chemin and R. Danchin, Fourier analysis
and nonlinear partial differential equations. Springer-Verlag,
Berlin Heidelberg, 2011.

\bibitem{BL76} J. Bergh and J. L\"{o}fstr\"{o}m, Interpolation
spaces, An introduction. Grundlehren der Mathematischen
Wissenschaften, Springer-Verlag, Berlin-New York, 1976.

\bibitem{BiS75} M. S. Birman and S. Z. Solomjak, On estimates on
singular number of integral operators III. Vest. LSU Math. 2 (1975),
9-27.


\bibitem{Br84} P. Brenner, On space-time means and everywhere defined
scattering operators for  nonlinear Klein-Gordon equations. Math. Z.
186 (1984), 383-391.

\bibitem{Br85} P. Brenner, On scattering and everywhere defined
scattering operators for nonlinear Klein-Gordon equtaons. J. Diff.
Equ., 56 (1985), 310-344.



\bibitem{CKSTT07} J. Colliander, M. Keel, G. Staffilani, H. Takaoka,
and T. Tao, Global well-posedness and scattering for the
energy-cirtical nonlinear Schr\"{o}dinger equation in
$\mathbb{R}^3$. Annals of Math., 167 (2008), 767-865.


\bibitem{GiV85b} J. Ginibre and G. Velo, Time decay of finite energy
solutions of the nonlinear Klein-Gordon and Schr\"{o}dinger
equations. Ann. Inst. H. Poincar\'{e} Phys. Th\'{e}or. 43 (1985),
399-442.

\bibitem{GiV95} J. Ginibre and G. Velo, Generalized Strichartz
inequalities for the wave equation. J. Funct. Anal., 133 (1995),
50-68.

\bibitem{GiV00}J. Ginibre and G. Velo, Scattering theory in the
energy space for a class of Hartree equations, Nonlinear wave
equations (Providence, RI, 1998), 29-60, Contemp. Math., 263, Amer.
Math. Soc., Providence, RI, 2000.

\bibitem{IMN} S. Ibrahim, N. Masmoudi and K. Nakanishi, Scattering
threshold for the focusing nonlinear Klein-Gordon equation. Analysis
and PDE., 4 (2011),  405-460.

\bibitem{IMN1} S. Ibrahim, N. Masmoudi and K. Nakanishi, Threshold
solutions in the case of mass-shift for the critical Klein-Gordon
equation. arXiv:1110.1709v1, To appear in Trans. Amer. Math. Soc.

\bibitem{KeT98} M. Keel and T. Tao, Endpoint Strichartz estimates. Amer. J.
Math., 120:5 (1998), 955-980.


\bibitem{KM} C. Kenig and F. Merle, Global well-posedness, scattering,
and blow-up for the energy-critical focusing nonlinear
Schr\"{o}dinger equation in the radial case. Invent. Math., 166
(2006), 645-675.



\bibitem{KV1} R. Killip and M. Visan, Nonlinear Schr\"odinger equations at critical
regularity. In Evolution equations, volume 17 of Clay Math. Proc.,
pages 325-437. Amer. Math. Soc., Providence, RI, 2013.

\bibitem{LiMZ09} D. Li, C. Miao and X. Zhang, The focusing
energy-critical Hartree equation. J. Diff. Equt., 246 (2009),
1139-1163.





%




\bibitem{MXZ07a} C. Miao, G. Xu and L. Zhao, Global well-posedness and
scattering for the energy-critical, defousing Hartree equation for
radial data. J. Funct. Anal., 253 (2007), 605-627.

\bibitem{MXZ09} C. Miao, G. Xu and L. Zhao, Global well-posedness and scattering for the defocusing
$H^{\frac12}$-subcritical Hartree equation in $\mathbb{R}^d$. Ann.
I. H. Poincar - AN., 26(2009), 1831-1852.

\bibitem{MXZ09a} C. Miao, G. Xu and L. Zhao, Global well-posedness and
scattering for the mass-critical Hartree equation with radial data.
Journal de Math$\acute{e}$matiques Pures et Appliqu$\acute{e}$es, 91
(2009), 49-79.

%

\bibitem{MXZ07b} C. Miao, G. Xu and L. Zhao, Global well-posedness and scattering for the energy-critical, defocusing Hartree equation
in $\mathbb{R}^{1+n}$. Communications in Partial Differential
Equations, 36 (2011), 729-776.




\bibitem{MZF} C. Miao, B. Zhang and D. Fang, Global well-posedness for
the Klein-Gordon equations below the energy norm. Journal of Partial
Differential Equations, 17 (2004), 97-121.


\bibitem{MZ} C. Miao and J. Zhang, On global solution to the
Klein-Gordon-Hartree equation below energy space. J. Diff. Equt.,
250(2011), 3418-3447.

\bibitem{MZZ} C. Miao, J. Zhang and J. Zheng, The Defocusing Energy-critical  Wave Equation with a Cubic Convolution. Indiana Univ.
Math. J., 63(2014), 993-1015.

\bibitem{MZh} C. Miao, J. Zheng, Energy Scattering for a  Klein-Gordon  Equation with a Cubic
Convolution. J. Diff. Equt., 257(2014), 2178-2224.

\bibitem{Mo89} K. Mochizuki, On small data scattering with cubic
convolution nonlinearity. J. Math. Soc. Japan, 41 (1989), 143-160.


\bibitem{Na99b} K. Nakanishi, Scattering theory for nonlinear
Klein-Gordon equation with Sobolev critical power. Internat. Math.
Res. Notices, 1 (1999), 31-60.



\bibitem{Na99d} K. Nakanishi, Energy scattering for Hartree
equations. Math. Res. Lett., 6 (1999), 107-118.

\bibitem{Na01} K. Nakanishi, Remarks on the energy scattering for nonlinear
Klein-Gordon and Schr\"{o}inger equations. Tohoku Math. J., II.
 53:2(2001), 285-303.


\bibitem{Pa2} B. Pausader, Scattering for the Beam Equation in Low Dimensions. Indiana Univ. Math. J.,
59 (2010), 791-822.



\bibitem{Pe85} H. Pecher, Low energy scattering for nonlinear
Klein-Gordon equations. J. Funct. Anal., 63 (1985), 101-122.

\bibitem{St81a} W. A. Strauss, Nonlinear scattering theory at low
energy. J. Funct. Anal., 41 (1981), 110-133.

\bibitem{St81b} W. A. Strauss, Nonlinear scattering theory at low
energy sequel. J. Funct. Anal., 43 (1981), 281-293.




\end{thebibliography}
\end{document}